\titleformat{\subsection}{\it}{\thesubsection.\enspace}{1pt}{}
\newtheorem{theo}{Theorem}[section]
\newtheorem{lemm}[theo]{Lemma}
\newtheorem{prop}[theo]{Proposition}
\newtheorem{rema}[theo]{Remark}
\numberwithin{equation}{section}
\begin{document}
\title{Optimal decay rate for the 2-D compressible Oldroyd-B and Hall-MHD model
\hspace{-4mm}
}

\author{ Zhaonan $\mbox{Luo}^1$ \footnote{Email: luozhn@fudan.edu.cn},\quad
Wei $\mbox{Luo}^2$\footnote{E-mail:  luowei23@mail2.sysu.edu.cn} \quad and\quad
Zhaoyang $\mbox{Yin}^{2,3}$\footnote{E-mail: mcsyzy@mail.sysu.edu.cn}\\
$^1\mbox{School}$ of Mathematical Sciences, Fudan University, Shanghai 200433, China.\\
$^2\mbox{Department}$ of Mathematics,
Sun Yat-sen University, Guangzhou 510275, China\\
$^3\mbox{Faculty}$ of Information Technology,\\ Macau University of Science and Technology, Macau, China}

\date{}
\maketitle
\hrule

\begin{abstract}
In this paper, we are concerned with long time behavior of the strong solutions to the 2-D compressible Oldroyd-B and Hall-MHD model. By virtue of the improved Fourier splitting method and the time weighted energy estimate, we obtain the $L^2$ decay rate $(1+t)^{-\frac{1}{4}}$. According to the Littlewood-Paley theory, we prove that the solutions belong to the critical Besov space with negative index. Finally, we show optimal decay rate in $H^2$-framework without the smallness restriction of low frequencies. \\
\vspace*{5pt}
\noindent {\it 2020 Mathematics Subject Classification}: 35Q30, 76A10, 76N10.

\vspace*{5pt}
\noindent{\it Keywords}: The compressible Oldroyd-B model; The compressible Hall-MHD equations; Besov spaces; Time decay rate.
\end{abstract}

\vspace*{10pt}

\tableofcontents

\section{Introduction}
In this paper we mainly study the compressible Oldroyd-B model \cite{1958Non,2018Global}:
\begin{align}\label{eq0}
\left\{
\begin{array}{ll}
\varrho_t+div(\varrho u)=0 , \\[1ex]
(\varrho u)_t+div(\varrho u\otimes u)-(1-\omega)(\Delta u+\nabla div~u)+\nabla_x{P(\varrho)}=div~\tau, \\[1ex]
\tau_t+u\cdot\nabla\tau+a\tau+g_b(\tau, \nabla u)=2\omega D(u),  \\[1ex]
\varrho|_{t=0}=\varrho_0,~~u|_{t=0}=u_0,~~\tau|_{t=0}=\tau_0. \\[1ex]
\end{array}
\right.
\end{align}
In \eqref{eq0}, $\varrho(t,x)$ stands for the density of the solvent, $u(t,x)$ denotes the velocity of the polymeric liquid and $\tau(t,x)$ represents the symmetric tensor of constrains. The parameters $\omega\in(0,1)$ stands for the coupling constant. The pressure satisfies $P(\varrho)=\varrho^\gamma$ with $\gamma\geq1$. The parameters satisfy $a\geq 0$ and $b\in[-1, 1]$. Moreover,
$$g_b(\tau, \nabla u)=\tau W(u)-W(u)\tau+b(D(u)\tau+\tau D(u)),$$
with the vorticity tensor $W(u)=\frac {\nabla u-(\nabla u)^T} {2}$ and the deformation tensor $D(u)=\frac {\nabla u+(\nabla u)^T} {2}$.
For more explanations on the modeling, one can refer to $\cite{2014Incompressible}$ and $\cite{2018Global}$.

Then we introduce the compressible Hall-MHD equations \cite{Fan2015On,2017MHDGlobal}:
\begin{align}\label{M0}
\left\{
\begin{array}{ll}
\varrho_t+div(\varrho u)=0 , \\[1ex]
(\varrho u)_t+div(\varrho u\otimes u)-\mu\Delta u-(\mu+\lambda)\nabla div~u+\nabla_x{P(\varrho)}=(curlB)\times B, \\[1ex]
B_t-\nu\Delta B-curl(u\times B)+curl[\frac {(curlB)\times B}{\varrho}]=0, ~~~~ div~B=0, \\[1ex]
\varrho|_{t=0}=\varrho_0,~~u|_{t=0}=u_0,~~B|_{t=0}=B_0. \\[1ex]
\end{array}
\right.
\end{align}
In \eqref{M0}, $B(t,x)$ represents magnetic field. The constants $\mu$ and $\lambda$ denote the viscosity coefficients of the flow and satisfy
$\mu>0$ and $2\mu+3\lambda\geq 0$. The positive constant $\nu$ is the magnetic diffusivity acting as a magnetic diffusion coefficient of the magnetic field.
Moreover, $curl(\frac {(curlB)\times B}{\varrho})$ represents the Hall effect. Notice that
$$(curlB)\times B=(B\cdot\nabla)B-\frac 1 2 \nabla(|B|^2),$$
and
$$curl(u\times B)=u(div~B)-(u\cdot\nabla)B+(B\cdot\nabla)u-B(div~u).$$
For more explanations on the modeling, one can refer to $\cite{2017MHDGlobal}$.

Let $a=1$, $\omega=\frac 1 2$ and $x\in\mathbb{R}^2$. Notice that $(\varrho,u,\tau)=(1,0,0)$ is a trivial solution of \eqref{eq0}. Taking $\rho=\varrho-1$, $I(\rho)=\frac {\rho} {\rho+1}$ and $k(\rho)=\gamma I(\rho)+\frac {\gamma-P'(1+\rho)} {\rho+1}$, we can rewrite \eqref{eq0} as the following system:
\begin{align}\label{eq1}
\left\{
\begin{array}{ll}
\rho_t+div~u=F , \\[1ex]
u_t-\frac 1 2(\Delta+\nabla div)u+\gamma\nabla\rho-div~\tau=G, \\[1ex]
\tau_t+\tau-D(u)=H, \\[1ex]
\rho|_{t=0}=\rho_0,~~u|_{t=0}=u_0,~~\tau|_{t=0}=\tau_0, \\[1ex]
\end{array}
\right.
\end{align}
where $F=-div(\rho u)$, $G=-u\cdot\nabla u-\frac 1 2I(\rho)(\Delta+\nabla div)u-I(\rho) div~\tau+k(\rho)\nabla\rho$ and $H=-u\cdot\nabla\tau-g_b(\tau, \nabla u)$.

Let $\mu=\nu=1$ and $\lambda=0$. The equations \eqref{M0} can be rewritten as the following system:
\begin{align}\label{M1}
\left\{
\begin{array}{ll}
\rho_t+div~u=F_1 , \\[1ex]
u_t-(\Delta+\nabla div)u+\gamma\nabla\rho=G_1, \\[1ex]
B_t-\Delta B=H_1, \\[1ex]
\rho|_{t=0}=\rho_0,~~u|_{t=0}=u_0,~~B|_{t=0}=B_0, \\[1ex]
\end{array}
\right.
\end{align}
where $F_1=-div(\rho u)$, $G_1=-u\cdot\nabla u-I(\rho)(\Delta+\nabla div)u-\frac {(curlB)\times B} {1+\rho}+k(\rho)\nabla\rho$
and $H_1=curl(u\times B)-curl[\frac {(curlB)\times B}{\rho+1}]$.

\subsection{The incompressible Oldroyd-B model}
In \cite{Guillope1990}, C. Guillop\'e,  and J. C. Saut first showed that the incompressible Oldroyd-B model admits a unique global strong solution in Sobolev spaces. The $L^p$-setting was given by E. Fern\'andez-Cara, F.Guill\'en and R. Ortega \cite{Fernandez-Cara}. The weak solutions of the incompressible Oldroyd-B model was proved by P. L. Lions and N. Masmoudi \cite{Lions-Masmoudi} for the case $\alpha=0$. Notice that the problem for the case $\alpha\neq0$ is still open, see \cite{2011Global,Masmoudi2013}. Later on, J. Y. Chemin and N. Masmoudi \cite{Chemin2001}
proved the existence and uniqueness of strong solutions in homogenous Besov spaces with critical index of regularity. Optimal decay rates for solutions to the 3-D incompressible Oldroyd-B model were obtained by M. Hieber, H. Wen and R. Zi \cite{2019OldroydB}. The sharp time decay rates of large solutions to the two-dimensional Oldroyd-B model were proved in \cite{Li2}. An approach based on the deformation tensor can be found in \cite{Li1,Lei-Zhou2005,Lei2008,2010On,2010On1,Zhang-Fang2012}.

\subsection{The compressible Oldroyd-B model}
Z. Lei \cite{2006Global} first investigated the incompressible limit problem of the compressible Oldroyd-B model in a torus. Recently, D. Fang and R. Zi \cite{2014Incompressible} studied the global well-posedness for compressible Oldroyd-B model in critical Besov spaces with $d\geq 2$. In \cite{2018Global}, Z. Zhou, C. Zhu and R. Zi proved the global well-posedness and decay rates for the 3-D compressible Oldroyd-B model. For the compressible Oldroyd-B type model based on the deformation tensor can be found in \cite{2010Global,2017viscoelastic,Suli2016}.

\subsection{The compressible Hall-MHD equations}
In \cite{Fan2015On}, J. S. Fan, A. Alsaedi, T. Hayat, G. Nakamura and Y. Zhou established global existence and optimal decay rates of solutions for 3-D compressible Hall-MHD equations in $H^3$-framework. Recently, $H^2$-framework for the same problem was studied by Z. A. Yao and J. Gao \cite{2017MHDGlobal}.

The compressible Hall-MHD equations reduce to the compressible MHD equations when the Hall effect term $curl(\frac {(curlB)\times B}{\varrho})$ is neglected. Then we cite some reference about the compressible MHD equations. For $d=2$, S. Kawashima \cite{1984Smooth} obtained the global existence of smooth solution to the general electromagnetic fluid equations. Recently, X. Hu and D. Wang \cite{2010MHDGlobal,2008MHDGlobal} established the existence and large-time behavior of global weak solutions with large data in a bounded domain. A. Suen and D. Hoff \cite{2012MHDGlobal} obtained the global low-energy weak solutions where initial data are chosen to be small and initial densities are assumed to be nonnegative and essentially bounded. F. C. Li and H. J. Yu \cite{2011MHDOptimal} and Q. Chen and Z. Tan \cite{2010MHDGlobal1} established the global existence of solution and obtained the decay rate of solution for the 3-D compressible MHD equations. Large time behavior of strong solutions to the compressible MHD system in the critical $L^p$ framework with $d\geq 2$ was proved by W. Shi and J. Xu in \cite{2018MHDLarge}. In \cite{2019MHDOptimal}, Q. Bie, Q. Wang and Z. A. Yao proved optimal decay for the compressible MHD equations in the critical regularity framework and and removed the smallness assumption of low frequencies.

\subsection{Short review for the CNS equations}
Taking $B=0$, the system \eqref{M0} reduce to the well-known compressible Navier-Stokes (CNS) equations. In order to study about the large time behaviour for the \eqref{eq0}, we cite some reference about the CNS equations. The large time behaviour of the global solutions $(\rho,u)$ to the 3-D CNS equations was firstly proved by A. Matsumura and T. Nishida in \cite{Matsumura}. Recently, H. Li and T. Zhang \cite{Li2011Large} obtained the optimal time decay rate for the 3-D CNS equations by spectrum analysis in Sobolev spaces. R. Danchin and J. Xu \cite{2016Optimal} studied about the large time behaviour in the critical Besov space with $d\geq 2$. J. Xu \cite{Xu2019} obtained the optimal time decay rate with a small low-frequency assumption in some Besov spaces with negative index. More recently, Z. Xin and J. Xu \cite{2018Optimal} studied about the large time behaviour and removed the smallness assumption of low frequencies.

\subsection{Main results}
The long time behavior for polymeric models is noticed by N. Masmoudi \cite{2016Equations}. To our best knowledge, large time behaviour for the 2-D compressible Oldroyd-B system \eqref{eq1} has not been studied yet. This problem is interesting and more difficult than the case with $d\geq 3$. In this paper, we firstly study about optimal time decay rate for the 2-D compressible Oldroyd-B system in $H^2$-framework. The proof is based on the Littlewood-Paley decomposition theory and the improved Fourier splitting method. Similar to \cite{He2009} and \cite{Luo-Yin}, we first cancel the linear term in Fourier space. By the Fourier splitting method and the bootstrap argument, we firstly obtain initial logarithmic decay rate 
\begin{align*}
\|(\rho,u,\tau)\|_{L^2}\leq C\ln^{-l}(e+t),
\end{align*}
for any $l\in N^{+}$. The main difficulty for us is to get the initial polynomial decay rate. For lack of time information for global solutions in $L^1$ from \eqref{eq1}, we otain 
\begin{align*}
\int_{S(t)}\int_{0}^{t}|\mathcal{F}(k(\rho)\nabla\rho)\cdot\bar{\hat{u}}|ds'd\xi\leq C(1+t)^{-\frac 1 2} \int_{0}^{t}\|\rho\|_{L^{2}}\|\nabla \rho\|_{L^{2}}\|u\|_{L^{2}}ds'.
\end{align*}
By virtue of the time weighted energy estimate and logarithmic decay rate, we improve the time decay rate to
\begin{align*}
\|(\rho,u,\tau)\|_{L^2}\leq C(1+t)^{-\frac{1}{4}}.
\end{align*} 
Notice that the time decay rate we obtained is not the optimal decay rate. However, we can prove $(\rho,u,\tau)\in L^\infty(0,\infty; \dot{B}^{-\frac 1 2}_{2,\infty})$ from \eqref{eq1} by using the time decay rate $(1+t)^{-\frac{1}{4}}$. Then we improve the time decay rate to $(1+t)^{-\frac{5}{16}}$ by the Littlewood-Paley decomposition theory and the Fourier splitting method. We deduce a slightly weaker conclusion $$(\rho,u,\tau)\in L^\infty(0,\infty; \dot{B}^{-1}_{2,\infty})$$ from \eqref{eq1} by using the time decay rate $(1+t)^{-\frac{5}{16}}$. Without the smallness restriction of low frequencies, we obtain optimal time decay rate 
\begin{align*}
\|(\rho,u)\|_{L^2}\leq C(1+t)^{-\frac 1 2}
\end{align*}
by the Littlewood-Paley decomposition theory and the Fourier splitting method. Moreover, we can prove the faster time decay rate for $\tau$ with $$\|\tau\|_{L^2}\leq C(1+t)^{-1}.$$ Finally, we apply the methods to the 2-D compressible Hall-MHD model \eqref{M1}. Processing Hall effect term through magnetic diffusion effect, we prove optimal time decay without small low-frequency assumption in critical Besov spaces with negative index. Notice that improved Fourier splitting method can be widely used
for other complex systems in the future.

Our main result can be stated as follows.
\begin{theo}\label{th2}
Let $d=2$. Let $(\rho,u,\tau)$ be a global strong solution of \eqref{eq1} with the initial data $(\rho_0,u_0,\tau_0)$ under the condition in Theorem \ref{th1}. In addition, if $(\rho_0,u_0,\tau_0)\in \dot{B}^{-1}_{2,\infty}$, then there exists a constant $C$ such that
\begin{align}
\|(\rho,u)\|_{L^2}\leq C(1+t)^{-\frac 1 2},~~~\|\tau\|_{L^2}\leq C(1+t)^{-1},
\end{align}
and
\begin{align}
\|\nabla(\rho,u,\tau)\|_{H^1}\leq C(1+t)^{-1}.
\end{align}
\end{theo}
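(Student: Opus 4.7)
The plan is to promote the intermediate decay rate $(1+t)^{-1/4}$ (established in the previous sections of the paper) up to the heat-type rate $(1+t)^{-1/2}$ by combining Schonbek's Fourier splitting method with the propagation of the critical negative-index Besov norm $\dot B^{-1}_{2,\infty}$, and then to exploit the damping structure of the $\tau$-equation to obtain the faster rate $(1+t)^{-1}$ for $\tau$ and for the gradient norms.

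The first step I would carry out is the propagation of $\dot B^{-1}_{2,\infty}$: namely, under the extra hypothesis $(\rho_0,u_0,\tau_0)\in\dot B^{-1}_{2,\infty}$, I would show that
$$\sup_{t\geq 0}\bigl\|(\rho,u,\tau)(t)\bigr\|_{\dot B^{-1}_{2,\infty}}\leq C.$$
This is obtained by applying the dyadic block $\dot\Delta_j$ to \eqref{eq1} and using a Lyapunov functional of the type already constructed to prove the $(1+t)^{-1/4}$ rate (which compensates for the absence of direct dissipation on $\rho$ by coupling it to $\nabla\cdot u$). A scale-by-scale energy estimate together with Duhamel then reduces the task to showing that the nonlinearities $F,G,H$ are time-integrable in $\dot B^{-1}_{2,\infty}$. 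The paraproduct and remainder laws, combined with the uniform $H^2$-bound coming from Theorem \ref{th1} and the intermediate polynomial decay $(1+t)^{-1/4}$, give exactly this integrability.

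With the $\dot B^{-1}_{2,\infty}$ control in hand I would close the optimal $L^2$ decay via Fourier splitting. From the equivalent energy inequality for \eqref{eq1},
$$\frac{d}{dt}\mathcal E(t)+c_0\bigl(\|\nabla u\|_{L^2}^2+\|\nabla\rho\|_{L^2}^2+\|\tau\|_{L^2}^2\bigr)\leq 0,$$
with $\mathcal E(t)$ comparable to $\|(\rho,u,\tau)\|_{L^2}^2$, I split the Fourier domain at $|\xi|\leq R(t)$, $R(t)^2=K/(1+t)$ with $K$ large. The dissipation dominates $\mathcal E$ outside this ball, while inside one uses
$$\int_{|\xi|\leq R(t)}\bigl|\widehat{(\rho,u,\tau)}(\xi)\bigr|^2d\xi\lesssim R(t)^{2}\,\|(\rho,u,\tau)\|_{\dot B^{-1}_{2,\infty}}^{2}\lesssim (1+t)^{-1},$$
so Gronwall yields $\|(\rho,u,\tau)\|_{L^2}\lesssim (1+t)^{-1/2}$. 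Repeating the argument on the first- and second-order energy functionals, each additional derivative gains a further factor $(1+t)^{-1/2}$ from the extra $R(t)^2$-weight, producing $\|\nabla(\rho,u,\tau)\|_{H^1}\lesssim (1+t)^{-1}$. Finally, rewriting the third equation of \eqref{eq1} as $\tau_t+\tau=D(u)+H$ (pure damping, no diffusion), Duhamel gives
$$\|\tau(t)\|_{L^2}\lesssim e^{-t}\|\tau_0\|_{L^2}+\int_0^t e^{-(t-s)}\bigl(\|\nabla u\|_{L^2}+\|H\|_{L^2}\bigr)ds\lesssim (1+t)^{-1},$$
using the gradient decay just established and the product bound on $H$.

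The hard part will be the first step, propagating $\dot B^{-1}_{2,\infty}$, because the index $-1$ sits exactly at the critical threshold for the 2-D problem, and the standard product laws in negative-index Besov spaces are tight there; closing the nonlinear estimates will require splitting each quadratic term into its low- and high-frequency parts and playing off the polynomial time decay of $(\rho,u,\tau)$ already obtained at the intermediate stage against the uniform $H^2$-regularity provided by Theorem \ref{th1}.
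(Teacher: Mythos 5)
Your overall architecture coincides with the paper's: establish the intermediate rate $(1+t)^{-1/4}$, propagate a negative-index Besov norm, feed it back into Fourier splitting, and finish $\tau$ with the damping/Duhamel argument. The final two steps of your plan (the splitting estimate $\int_{|\xi|\leq R(t)}|\widehat{(\rho,u,\tau)}|^2d\xi\lesssim R(t)^2\|(\rho,u,\tau)\|^2_{\dot B^{-1}_{2,\infty}}$ and the exponential-damping bound for $\tau$) are essentially what the paper does. However, there is a genuine gap in your first and, as you yourself note, hardest step.

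You propose to propagate $\dot B^{-1}_{2,\infty}$ \emph{directly} from the $(1+t)^{-1/4}$ decay, claiming that the product laws together with the uniform $H^2$ bound give time-integrability of $F,G,H$ in $\dot B^{-1}_{2,\infty}$. They do not: since $L^1\hookrightarrow\dot B^{-1}_{2,\infty}$ in $d=2$ is the natural embedding here, the best one gets from the intermediate rates $\|(\rho,u,\tau)\|_{L^2}\lesssim(1+t)^{-1/4}$ and $\|\nabla(\rho,u,\tau)\|_{H^1}\lesssim(1+t)^{-3/4}$ is $\|(F,G,H)\|_{\dot B^{-1}_{2,\infty}}\lesssim\|(F,G,H)\|_{L^1}\lesssim(1+t)^{-1}$, whose time integral diverges logarithmically. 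This is exactly the constraint $\sigma<2\alpha$ in Lemma \ref{Lemma1}: with $\alpha=\tfrac12$ (i.e.\ $E_0\lesssim(1+t)^{-1/2}$) one can only propagate $\dot B^{-\sigma}_{2,\infty}$ for $\sigma<1$, and $\sigma=1$ is out of reach. Refining the paraproduct decomposition does not repair this, because the obstruction is the time exponent, not the space index. The paper's resolution is an iteration you are missing: first propagate $\dot B^{-1/2}_{2,\infty}$ (allowed since $\sigma=\tfrac12<2\alpha=1$), use it in Proposition \ref{prop3} to upgrade the decay to $E_0\lesssim(1+t)^{-5/8}$, and only then propagate $\dot B^{-1}_{2,\infty}$ (now $1<2\cdot\tfrac58$ and the nonlinearity integral converges like $\int_0^t(1+s)^{-9/8}ds$), after which a final application of the splitting argument yields $E_0\lesssim(1+t)^{-1}$ and $E_1\lesssim(1+t)^{-2}$. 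Without this intermediate rung your scheme does not close at the critical index.
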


\begin{rema}
Combining with the result in \cite{Xu2019,Li2}, one can see that the time decay rate for $(\rho,u)$ obtained in Theorem \ref{th2} is optimal.
\end{rema}

\begin{rema}
In previous papers, researchers usually add the condition $(\rho_0,u_0,\tau_0)\in L^1$ to obtain the optimal time decay rate. Since $L^1\hookrightarrow \dot{B}^{-1}_{2,\infty}$, it follows that our condition is weaker and the results still hold true for $(\rho_0, u_0,\tau_0)\in L^1$. Moreover, the assumption can be replaced with a weaker assumption $\sup_{j\leq j_0}2^{-j}\|\dot{\Delta}_j (\rho_0,u_0,\tau_0)\|_{L^2} <\infty$, for any $j_0\in \mathbb{Z}$.
\end{rema}

\begin{theo}\label{th4}
Let $d=2$. Let $(\rho,u,B)$ be a global strong solution of \eqref{M1} with the initial data $(\rho_0,u_0,B_0)$ under the condition in Theorem \ref{th3}. In addition, if $(\rho_0,u_0,B_0)\in \dot{B}^{-1}_{2,\infty}$, then there exists a constant $C$ such that
\begin{align}
\|(\rho,u,B)\|_{L^2}\leq C(1+t)^{-\frac 1 2},
\end{align}
and
\begin{align}
\|\nabla(\rho,u,B)\|_{H^1}\leq C(1+t)^{-1}.
\end{align}
\end{theo}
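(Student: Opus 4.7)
The plan is to transplant the bootstrap architecture developed for Theorem \ref{th2} to the system \eqref{M1}, with the magnetic field $B$ playing the structural role that the extra stress $\tau$ played in the Oldroyd-B case. The main structural change is that the $B$-equation is a pure heat equation without zeroth-order damping, so $B$ will decay at the same rate as $(\rho,u)$ rather than faster --- this is why Theorem \ref{th4} does not claim the improvement $\|B\|_{L^2}\le C(1+t)^{-1}$ analogous to the $\tau$-estimate. The new quantitative difficulty is the Hall term $\mathrm{curl}[((\mathrm{curl}\,B)\times B)/(1+\rho)]$, which is quasilinear of second order in $B$.

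The bootstrap proceeds in three waves. First, the argument of \cite{Luo-Yin} yields the rough bound $\|(\rho,u,B)\|_{L^2}\lesssim \ln^{-\ell}(e+t)$ for every $\ell\in\mathbb{N}^+$; then, after cancelling the low-order linear coupling between $\rho$ and $u$ in Fourier variables as in \cite{He2009,2018Global}, a Fourier splitting estimate promotes this logarithmic rate to the algebraic one $(1+t)^{-\frac14}$. Second, to substitute the usual $L^1$ hypothesis by the weaker $\dot{B}^{-1}_{2,\infty}$ hypothesis, I would write each unknown by Duhamel against its linear semigroup ($e^{t\Delta}$ for $B$, the compressible semigroup for $(\rho,u)$), and combine the rate $(1+t)^{-\frac14}$ with paraproduct/product laws in $\dot{B}^s_{2,\infty}$ to close a uniform bound $(\rho,u,B)\in L^\infty(0,\infty;\dot{B}^{-\frac12}_{2,\infty})$. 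Feeding this back into Fourier splitting produces the intermediate rate $(1+t)^{-\frac{5}{16}}$; a second iteration then propagates the stronger Besov bound $(\rho,u,B)\in L^\infty(0,\infty;\dot{B}^{-1}_{2,\infty})$, after which a final Fourier-splitting pass delivers the optimal $L^2$ decay $(1+t)^{-\frac12}$. The $H^2$ gradient bound $\|\nabla(\rho,u,B)\|_{H^1}\le C(1+t)^{-1}$ follows from the time-weighted energy inequality of Theorem \ref{th3} combined with a Littlewood--Paley split: the low frequencies inherit the $L^2$ decay, while the high frequencies gain an extra factor $(1+t)^{-\frac12}$ from the parabolic smoothing built into the second-order part of the linearization.

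The principal obstacle I anticipate is the Hall term at every level of the bootstrap, since $\nabla^2 B$ is of the same order as the dissipation in the $B$-equation. I would split it as
\begin{equation*}
\mathrm{curl}\Bigl[\frac{(\mathrm{curl}\,B)\times B}{1+\rho}\Bigr] = \mathrm{curl}\bigl[(\mathrm{curl}\,B)\times B\bigr] - \mathrm{curl}\bigl[I(\rho)(\mathrm{curl}\,B)\times B\bigr],
\end{equation*}
absorb the first term using the $H^2$ smallness of $\nabla B$ furnished by Theorem \ref{th3}, and handle the second by a commutator-type estimate in $\dot{B}^{-1}_{2,\infty}$ that trades one derivative on $B$ against the low-frequency information on $\rho$ carried by the negative Besov norm. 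Closing this commutator bound uniformly in time is, in my view, the only place where the Hall-MHD argument departs nontrivially from the Oldroyd-B one; granting it, the rest of the scheme reproduces the proof of Theorem \ref{th2} almost verbatim.
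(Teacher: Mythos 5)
Your bootstrap skeleton --- logarithmic decay, then $(1+t)^{-\frac14}$ in $L^2$, then $\dot{B}^{-\frac12}_{2,\infty}$, then $(1+t)^{-\frac{5}{16}}$, then $\dot{B}^{-1}_{2,\infty}$, then the optimal rate --- is exactly the paper's route (Propositions \ref{prop7}, \ref{prop8}, Lemma \ref{Lemma2} and the two final Fourier-splitting passes), and your remark that $B$ decays like $(\rho,u)$ rather than faster like $\tau$ matches the statement proved. You diverge in two sub-steps, in both cases proposing heavier machinery than the paper uses. For the propagation of negative Besov regularity the paper does not go through Duhamel and paraproducts: it applies $\dot{\Delta}_j$ to \eqref{M1}, performs an $L^2$ energy estimate on each dyadic block, multiplies by $2^{-2j\sigma}$, takes the $l^\infty$ norm, and controls $\int_0^t\|(F_1,G_1,H_1)\|_{\dot{B}^{-\sigma}_{2,\infty}}\,ds$ by the elementary embedding $L^{\frac{2}{\sigma+1}}\hookrightarrow\dot{B}^{-\sigma}_{2,\infty}$ of Lemma \ref{Lemma} together with Gagliardo--Nirenberg; this avoids any analysis of the compressible Green's matrix and closes with the same integrability condition $\int_0^t(1+s)^{-(1+\alpha-\frac{\sigma}{2})}ds\leq C$. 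Likewise, the Hall term needs no commutator estimate: it only ever has to be measured in $L^1$ (for the low-frequency Fourier integral) or in $L^{\frac{2}{\sigma+1}}$ (inside Lemma \ref{Lemma2}), and the paper disposes of it with the direct product bound
\begin{equation*}
\Bigl\|\,curl\Bigl[\frac{(curlB)\times B}{\rho+1}\Bigr]\Bigr\|_{L^1}\leq C\|B\|_{L^2}\|\nabla B\|_{H^1},
\end{equation*}
whose right-hand side already carries enough decay at every stage of the iteration; the step you flag as the only nontrivial departure is in fact the most routine one. Finally, the gradient bound $\|\nabla(\rho,u,B)\|_{H^1}\leq C(1+t)^{-1}$ is obtained not by a high/low frequency split with parabolic smoothing (which could not handle $\nabla\rho$, since the continuity equation has no smoothing and the dissipation of $\nabla\rho$ comes only from the cross term $\langle\Lambda u,\nabla\Lambda\rho\rangle$ in $\bar{E}_1$) but by running the same Fourier-splitting and time-weighted integration on the level-one energy $\bar{E}_1$, yielding $\bar{E}_1\leq C(1+t)^{-2}$.
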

\begin{rema}
	Combining with the result in \cite{Xu2019}, one can prove that the $H^1$ decay rate for $(\rho,u,B)$ obtained in Theorem \ref{th4} is optimal.
\end{rema}

The paper is organized as follows. In Section 2 we introduce some notations and  give some preliminaries which will be used in the sequel. In Section 3 we prove the time decay rate of solutions to the 2-D compressible Oldroyd-B model
by using the Fourier splitting method, the Littlewood-Paley decomposition theory and the bootstrap argument. In Section 4 we apply the methods to proving the time decay rate for the 2-D compressible Hall-MHD model.


\section{Preliminaries}
In this section we introduce some notations and useful lemmas which will be used in the sequel.

The symbol $\widehat{f}=\mathcal{F}(f)$ represents the Fourier transform of $f$.
Let $\Lambda^s f=\mathcal{F}^{-1}(|\xi|^s \widehat{f})$.
We agree that $\nabla$ stands for $\nabla_x$ and $div$ stands for $div_x$.

We now recall the Littlewood-Paley decomposition theory and and Besov spaces.
\begin{prop}\cite{Bahouri2011}\label{pro0}
Let $\mathcal{C}$ be the annulus $\{\xi\in\mathbb{R}^2:\frac 3 4\leq|\xi|\leq\frac 8 3\}$. There exist radial function $\varphi$, valued in the interval $[0,1]$, belonging respectively to $\mathcal{D}(\mathcal{C})$, and such that
$$ \forall\xi\in\mathbb{R}^2\backslash\{0\},\ \sum_{j\in\mathbb{Z}}\varphi(2^{-j}\xi)=1, $$
$$ |j-j'|\geq 2\Rightarrow\mathrm{Supp}\ \varphi(2^{-j}\cdot)\cap \mathrm{Supp}\ \varphi(2^{-j'}\cdot)=\emptyset. $$
Further, we have
$$ \forall\xi\in\mathbb{R}^2\backslash\{0\},\ \frac 1 2\leq\sum_{j\in\mathbb{Z}}\varphi^2(2^{-j}\xi)\leq 1. $$
\end{prop}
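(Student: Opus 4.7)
The plan is to build $\varphi$ explicitly from a radial cutoff and then read off the four stated properties by direct manipulations. I would first fix a radial, smooth, non-increasing function $\chi\colon\mathbb{R}^2\to[0,1]$ with $\chi(\xi)=1$ for $|\xi|\le 3/4$ and $\chi(\xi)=0$ for $|\xi|\ge 4/3$; such a $\chi$ exists by standard mollification of the indicator of a ball. Setting $\varphi(\xi):=\chi(\xi/2)-\chi(\xi)$, the difference is a smooth radial function valued in $[0,1]$ (monotonicity of $\chi$ ensures nonnegativity) whose support sits inside $\{3/4\le|\xi|\le 8/3\}$, so $\varphi\in\mathcal{D}(\mathcal{C})$.

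Next I would verify the partition-of-unity identity by a telescoping argument. For any $\xi\neq 0$ and any integer $N\ge 1$,
\begin{equation*}
\sum_{j=-N}^{N}\varphi(2^{-j}\xi)=\sum_{j=-N}^{N}\bigl[\chi(2^{-j-1}\xi)-\chi(2^{-j}\xi)\bigr]=\chi(2^{-N-1}\xi)-\chi(2^{N}\xi).
\end{equation*}
Letting $N\to\infty$, the first term tends to $1$ because $\chi\equiv 1$ in a neighborhood of the origin, while the second tends to $0$ because $\chi$ has compact support; this yields $\sum_{j\in\mathbb{Z}}\varphi(2^{-j}\xi)=1$.

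For the almost-disjointness of dyadic supports I would observe that $\mathrm{Supp}\,\varphi(2^{-j}\cdot)\subset\{(3/4)\cdot 2^j\le|\xi|\le (8/3)\cdot 2^j\}$; a nonempty intersection with the corresponding set at index $j'\ge j+2$ would force $(8/3)\cdot 2^j\ge (3/4)\cdot 2^{j'}$, hence $2^{j'-j}\le 32/9<4$, contradicting $j'-j\ge 2$. For the two-sided bound on $\sum\varphi^2(2^{-j}\xi)$ I would then exploit this disjointness: at each fixed $\xi\neq 0$ at most two consecutive terms $a:=\varphi(2^{-j}\xi)$ and $b:=\varphi(2^{-(j+1)}\xi)$ are nonzero, and the partition-of-unity identity forces $a+b=1$ with $a,b\in[0,1]$. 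Therefore $a^2+b^2\le(a+b)^2=1$, while the elementary inequality $ab\le((a+b)/2)^2=1/4$ gives $a^2+b^2=1-2ab\ge 1/2$.

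The main obstacle here is genuinely mild and reduces to the elementary arithmetic inequality $32/9<4$, which pins down the dyadic spacing that makes the supports pairwise disjoint at distance two; everything else is one-line bookkeeping (telescoping the sum, applying an AM--GM type estimate on the pair $(a,b)$, or invoking a standard radial bump). No analytic input beyond the existence of a radial cutoff is required.
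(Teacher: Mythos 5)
Your construction is correct, and it is the standard one: the paper itself gives no proof of this proposition (it is quoted verbatim from the cited reference \cite{Bahouri2011}), and the argument there is exactly your telescoping construction $\varphi(\xi)=\chi(\xi/2)-\chi(\xi)$ from a radial non-increasing cutoff, with the support bound $32/9<4$ and the identity $a^2+b^2=1-2ab$ on the at most two overlapping terms. Nothing further is needed.
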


Let $u$ be a tempered distribution in $\mathcal{S}'_h(\mathbb{R}^2)$. For all $j\in\mathbb{Z}$, define
$$\dot{\Delta}_j u=\mathcal{F}^{-1}(\varphi(2^{-j}\cdot)\mathcal{F}u).$$
Then the Littlewood-Paley decomposition is given as follows:
$$ u=\sum_{j\in\mathbb{Z}}\dot{\Delta}_j u \quad \text{in}\ \mathcal{S}'(\mathbb{R}^2). $$
Let $s\in\mathbb{R},\ 1\leq p,r\leq\infty.$ The homogeneous Besov space $\dot{B}^s_{p,r}$ is defined by
$$ \dot{B}^s_{p,r}=\{u\in \mathcal{S}'_h:\|u\|_{\dot{B}^s_{p,r}}=\Big\|(2^{js}\|\dot{\Delta}_j u\|_{L^p})_j \Big\|_{l^r(\mathbb{Z})}<\infty\}.$$

The following lemma describes inclusions between Lesbesgue and Besov spaces
\begin{lemm}\cite{Bahouri2011}\label{Lemma}
Let $1\leq p\leq 2$ and $d=2$. Then we have
$$L^{p}\hookrightarrow \dot{B}^{1-\frac 2 p}_{2,\infty}.$$
\end{lemm}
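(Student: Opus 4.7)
The plan is to prove the embedding directly from the definition of the homogeneous Besov norm
\begin{equation*}
\|f\|_{\dot{B}^{1-2/p}_{2,\infty}} = \sup_{j \in \mathbb{Z}} 2^{j(1 - 2/p)} \|\dot{\Delta}_j f\|_{L^2},
\end{equation*}
via Bernstein's inequality. The target reduces to establishing the $j$-uniform bound $\|\dot{\Delta}_j f\|_{L^2} \lesssim 2^{j(2/p - 1)} \|f\|_{L^p}$, which, weighted by $2^{j(1-2/p)}$, produces a right-hand side independent of $j$; taking the supremum then yields the embedding.

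The main step is Bernstein's inequality applied to the dyadic block. Since $\dot{\Delta}_j f$ has Fourier support in the annulus $\{|\xi| \sim 2^j\}$ by Proposition \ref{pro0}, for $1 \leq p \leq 2$ in dimension $d=2$ we have
\begin{equation*}
\|\dot{\Delta}_j f\|_{L^2} \lesssim 2^{2j(1/p - 1/2)} \|\dot{\Delta}_j f\|_{L^p} = 2^{j(2/p - 1)} \|\dot{\Delta}_j f\|_{L^p}.
\end{equation*}
To close the loop I still need $\|\dot{\Delta}_j f\|_{L^p} \lesssim \|f\|_{L^p}$, with a constant independent of $j$. This follows by writing $\dot{\Delta}_j f = 2^{2j} h(2^j \cdot) \ast f$ with $h = \mathcal{F}^{-1}\varphi$ a fixed Schwartz function, then applying Young's convolution inequality together with the scale-invariance $\|2^{2j} h(2^j \cdot)\|_{L^1} = \|h\|_{L^1}$. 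Combining the two estimates and taking $\sup_j$ finishes the argument.

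There is no substantive obstacle here: this is a textbook Littlewood--Paley fact (essentially the statement of \cite{Bahouri2011}), and the critical exponent $1 - 2/p$ is pinned down precisely by balancing the Bernstein scaling factor $2^{jd(1/p - 1/2)}$ with $d=2$ against the Besov weight. The limiting case $p=1$ recovers the embedding $L^1 \hookrightarrow \dot{B}^{-1}_{2,\infty}$ that is repeatedly used in the remark following Theorem \ref{th2} to weaken the usual $L^1$ initial-data hypothesis.
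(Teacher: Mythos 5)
Your proof is correct: the combination of Bernstein's inequality on the annulus $\{|\xi|\sim 2^j\}$ with the $j$-uniform bound $\|\dot{\Delta}_j f\|_{L^p}\lesssim \|f\|_{L^p}$ from Young's inequality and the scale invariance of $\|2^{2j}h(2^j\cdot)\|_{L^1}$ gives exactly $2^{j(1-2/p)}\|\dot{\Delta}_j f\|_{L^2}\lesssim\|f\|_{L^p}$ uniformly in $j$. The paper does not prove this lemma but simply cites \cite{Bahouri2011}, and your argument is precisely the standard proof given there, so there is nothing to compare beyond noting agreement.
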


The following lemma is the Gagliardo-Nirenberg inequality of Sobolev type.
\begin{lemm}\cite{1959On}\label{Lemma0}
Let $d=2,~p\in[2,+\infty)$ and $0\leq s,s_1\leq s_2$, then there exists a constant $C$ such that
 $$\|\Lambda^{s}f\|_{L^{p}}\leq C \|\Lambda^{s_1}f\|^{1-\theta}_{L^{2}}\|\Lambda^{s_2} f\|^{\theta}_{L^{2}},$$
where $0\leq\theta\leq1$ and $\theta$ satisfy
$$ s+2(\frac 1 2 -\frac 1 p)=s_1 (1-\theta)+\theta s_2.$$
Note that we require that $0<\theta<1$, $0\leq s_1\leq s$, when $p=\infty$.
\end{lemm}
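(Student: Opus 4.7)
The plan is to prove the Gagliardo--Nirenberg estimate by a dyadic/Littlewood--Paley decomposition combined with Bernstein's inequality, exactly in the same spirit used throughout this paper. I set $\sigma:=s+2(\tfrac12-\tfrac1p)$, so that the scaling condition reads $\sigma=(1-\theta)s_1+\theta s_2$ and my aim becomes
\[
\|\Lambda^s f\|_{L^p}\le C\,\|\Lambda^{s_1}f\|_{L^2}^{\,1-\theta}\|\Lambda^{s_2}f\|_{L^2}^{\,\theta}.
\]
Write $f=\sum_{j\in\mathbb Z}\dot\Delta_j f$ from Proposition 2.1. For $p<\infty$ I would start from the embedding $\dot B^0_{p,1}(\mathbb R^2)\hookrightarrow L^p(\mathbb R^2)$ to obtain
\[
\|\Lambda^s f\|_{L^p}\le\sum_{j\in\mathbb Z}\|\dot\Delta_j\Lambda^s f\|_{L^p}.
\]
Each dyadic piece is Fourier-localized in an annulus of size $2^j$, so by Bernstein's inequality (in $d=2$) one has the two bounds $\|\dot\Delta_j\Lambda^s f\|_{L^p}\lesssim 2^{j\sigma}\|\dot\Delta_j f\|_{L^2}$ and $\|\dot\Delta_j f\|_{L^2}\lesssim 2^{-j s_i}\|\Lambda^{s_i}f\|_{L^2}$ for $i=1,2$. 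Combining, $\|\dot\Delta_j\Lambda^s f\|_{L^p}\lesssim 2^{j(\sigma-s_i)}\|\Lambda^{s_i}f\|_{L^2}$.

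Next I would split the dyadic sum at an integer cut-off $N$ to be chosen later:
\[
\|\Lambda^s f\|_{L^p}\lesssim \sum_{j\le N}2^{j(\sigma-s_1)}\|\Lambda^{s_1}f\|_{L^2}+\sum_{j>N}2^{j(\sigma-s_2)}\|\Lambda^{s_2}f\|_{L^2}.
\]
The hypothesis $0\le s_1\le s$ together with the constraint on $\theta$ and $s_1\le s_2$ guarantees $s_1\le\sigma\le s_2$, with strict inequality in the generic case $0<\theta<1$. Thus both geometric series converge and are controlled by their largest term, giving
\[
\|\Lambda^s f\|_{L^p}\lesssim 2^{N(\sigma-s_1)}\|\Lambda^{s_1}f\|_{L^2}+2^{N(\sigma-s_2)}\|\Lambda^{s_2}f\|_{L^2}.
\]
Choosing $N$ so that $2^{N(s_2-s_1)}\simeq \|\Lambda^{s_2}f\|_{L^2}/\|\Lambda^{s_1}f\|_{L^2}$ balances the two terms and yields precisely the claimed power with $\theta=(\sigma-s_1)/(s_2-s_1)$, which is exactly the $\theta$ determined by the scaling identity in the statement.

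The endpoint/degenerate cases require a little extra care. If $\theta=0$ (so $\sigma=s_1$) or $\theta=1$ (so $\sigma=s_2$), one of the geometric sums becomes $\sum 1$ and one recovers the inequality directly from a Bernstein-type embedding $\dot B^{\sigma}_{2,1}\hookrightarrow L^p$ applied to $\Lambda^s f$. The case $p=\infty$ is the main obstacle, because $\dot B^0_{\infty,1}\hookrightarrow L^\infty$ still holds but the endpoints $\theta\in\{0,1\}$ fail: with $\theta=0$ one would need $L^2\hookrightarrow L^\infty$ and with $\theta=1$ the high-frequency sum diverges logarithmically. This is exactly why the statement restricts to $0<\theta<1$ when $p=\infty$; under that strict inequality the same split-sum argument goes through and the optimization of $N$ produces the interpolation bound. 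Finally I would verify the scaling identity $s+2(\tfrac12-\tfrac1p)=(1-\theta)s_1+\theta s_2$ for the chosen $\theta$ to confirm consistency, which is immediate from the definition $\sigma=s+2(\tfrac12-\tfrac1p)$.
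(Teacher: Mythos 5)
The paper does not actually prove this lemma: it is quoted verbatim from Nirenberg's 1959 paper, so there is no in-text argument to compare against. Your dyadic proof is therefore a genuinely self-contained alternative, and its core is correct: the reduction to $\sigma=s+2(\tfrac12-\tfrac1p)=(1-\theta)s_1+\theta s_2$, the two Bernstein bounds $\|\dot{\Delta}_j\Lambda^s f\|_{L^p}\lesssim 2^{j(\sigma-s_i)}\|\Lambda^{s_i}f\|_{L^2}$, the split of the sum at a cut-off $N$, and the optimization of $N$ to balance the two geometric tails is exactly the standard Littlewood--Paley route to Gagliardo--Nirenberg, and it is complete whenever $s_1<\sigma<s_2$, i.e.\ $0<\theta<1$. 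Your discussion of why $p=\infty$ forces $0<\theta<1$ is also the right explanation of the caveat in the statement.

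The one place where your argument does not close as written is the endpoint case $\theta\in\{0,1\}$ with $p<\infty$, which the lemma does allow. There you propose to conclude from an embedding of the type $\dot{B}^{\sigma}_{2,1}\hookrightarrow L^p$, but the right-hand side of the lemma is $\|\Lambda^{\sigma}f\|_{L^2}=\|f\|_{\dot{B}^{\sigma}_{2,2}}$, and the $\ell^1$-based Besov norm $\|f\|_{\dot{B}^{\sigma}_{2,1}}$ is \emph{not} controlled by it, so this step begs the question. The endpoint statement is precisely the homogeneous Sobolev embedding $\dot{H}^{1-2/p}(\mathbb{R}^2)\hookrightarrow L^p(\mathbb{R}^2)$ for $2\le p<\infty$, and to get it by Littlewood--Paley you must replace the crude triangle-inequality embedding $\dot{B}^{0}_{p,1}\hookrightarrow L^p$ by the square-function version $\dot{B}^{0}_{p,2}\hookrightarrow L^p$ (valid for $p\ge 2$), after which Cauchy--Schwarz in $j$ gives $\|\Lambda^s f\|_{L^p}\lesssim\big(\sum_j 2^{2j\sigma}\|\dot{\Delta}_j f\|_{L^2}^2\big)^{1/2}\simeq\|\Lambda^{\sigma}f\|_{L^2}$; alternatively one can simply invoke Hardy--Littlewood--Sobolev. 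You should also note that the balancing choice of $N$ degenerates when $s_1=s_2$, in which case the inequality again reduces to this same Sobolev embedding. With those two repairs the proof is complete.
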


\section{Optimal time decay rate for the compressible Oldroyd-B model}
This section is devoted to investigating the long time behaviour for the 2-D compressible Oldroyd-B model.
We first introduce the energy and energy dissipation functionals for $(\rho,u,\tau)$ as follows:
$$E_\sigma=\|\Lambda^\sigma(\rho,u,\tau)\|^2_{H^{2-\sigma}}+2\eta\langle \Lambda^\sigma u, \nabla \Lambda^\sigma\rho \rangle_{H^{1-\sigma}},$$
and
$$D_\sigma=\eta\gamma\|\nabla\Lambda^\sigma\rho\|^2_{H^{1-\sigma}}+\frac 1 2\|\nabla \Lambda^\sigma u\|^2_{H^{2-\sigma}}+\frac 1 2\|div~\Lambda^\sigma u\|^2_{H^{2-\sigma}}+\|\Lambda^\sigma\tau\|^2_{H^{2-\sigma}},$$
where $\sigma=0$ or 1 and $\eta$ is a sufficiently small constant.

Using the energy
method and interpolation theory in \cite{2018Global}, one can easily deduce that the global existence of strong solutions for \eqref{eq1}. Thus we omit the
proof here and present the following Theorem.
\begin{theo}\label{th1}
Let $d=2$. Let $(\rho,u,\tau)$ be a strong solution of \eqref{eq1} with initial data $(\rho_0,u_0,\tau_0)\in H^2$. Then, there exists some sufficiently small constant $\epsilon_0$ such that if
\begin{align*}
E_0(0)\leq \epsilon_0,
\end{align*}
then \eqref{eq1} admits a unique global strong solution $(\rho,u,\tau)$. Moreover, for $\sigma=0$ or 1 and any $t>0$, we have
\begin{align}\label{ineq0}
\frac {d} {dt} E_\sigma(t)+D_\sigma(t)\leq 0.
\end{align}
\end{theo}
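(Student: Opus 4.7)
The plan is the now-classical scheme for Navier--Stokes-type hyperbolic--parabolic systems: first produce a local-in-time solution in $H^2$, then close a uniform a priori estimate that upgrades it to a global one under smallness. Local well-posedness for \eqref{eq1} can be obtained by a Friedrichs regularization or fixed-point construction exactly as in the three-dimensional work \cite{2018Global}; the dimension plays no role in that step. The real content is therefore the differential inequality \eqref{ineq0}, and a standard continuation argument then yields $T^\ast=\infty$ as soon as $E_0(0)$ is small enough.

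To derive \eqref{ineq0} I first establish the plain $H^{2-\sigma}$ identity. Applying $\Lambda^\sigma$ to the three equations and pairing the continuity, momentum, and stress equations with $\gamma\Lambda^\sigma\rho$, $\Lambda^\sigma u$, and $\Lambda^\sigma\tau$ in $H^{2-\sigma}$, the linear coupling $\gamma\langle\nabla\rho,u\rangle+\gamma\langle\rho,div\,u\rangle$ cancels and, by symmetry of $\tau$, so does $\langle div\,\tau,u\rangle+\langle D(u),\tau\rangle$, leaving
\begin{equation*}
\tfrac12\tfrac{d}{dt}\bigl(\gamma\|\Lambda^\sigma\rho\|_{H^{2-\sigma}}^2+\|\Lambda^\sigma u\|_{H^{2-\sigma}}^2+\|\Lambda^\sigma\tau\|_{H^{2-\sigma}}^2\bigr)+\tfrac12\|\nabla\Lambda^\sigma u\|_{H^{2-\sigma}}^2+\tfrac12\|div\,\Lambda^\sigma u\|_{H^{2-\sigma}}^2+\|\Lambda^\sigma\tau\|_{H^{2-\sigma}}^2=\mathcal{N}_\sigma,
\end{equation*}
where $\mathcal{N}_\sigma$ collects the contributions of $F,G,H$. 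This controls $u$ and $\tau$ but produces no dissipation for $\rho$, which is precisely why the cross term $2\eta\langle\Lambda^\sigma u,\nabla\Lambda^\sigma\rho\rangle_{H^{1-\sigma}}$ is built into $E_\sigma$.

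Differentiating the cross term in time and substituting from the momentum and continuity equations gives the dissipation for $\rho$: the $-\gamma\nabla\rho$ piece produces the favourable $-\eta\gamma\|\nabla\Lambda^\sigma\rho\|_{H^{1-\sigma}}^2$, the $div\,u$ piece of the continuity equation contributes the harmless $+\eta\|div\,\Lambda^\sigma u\|_{H^{1-\sigma}}^2$, and the viscous and stress couplings against $\nabla\Lambda^\sigma\rho$ are Cauchy--Schwarzed into $\tfrac{\eta\gamma}{4}\|\nabla\Lambda^\sigma\rho\|_{H^{1-\sigma}}^2+C\eta(\|\nabla\Lambda^\sigma u\|_{H^{2-\sigma}}^2+\|\Lambda^\sigma\tau\|_{H^{2-\sigma}}^2)$. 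Adding this to the plain identity and taking $\eta$ sufficiently small (relative to $\gamma$ and the viscosity constants) makes $E_\sigma\simeq\|\Lambda^\sigma(\rho,u,\tau)\|_{H^{2-\sigma}}^2$ and leaves $D_\sigma$ as net dissipation on the left-hand side.

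The main obstacle, and the only place where the two-dimensional setting needs care, is bounding $\mathcal{N}_\sigma$, because $H^1\not\hookrightarrow L^\infty$ when $d=2$. The delicate terms are the quasilinear piece $I(\rho)(\Delta+\nabla div)u$ inside $G$, the product $I(\rho)\,div\,\tau$, the convection terms $u\cdot\nabla u$ and $u\cdot\nabla\tau$, and $g_b(\tau,\nabla u)$ inside $H$. My plan is to distribute derivatives so that the top-order factor sits in $L^2$ while the low-order factor is placed in $L^\infty$ via $H^2\hookrightarrow L^\infty$ (with commutator/product estimates for the $H^{2-\sigma}$ pairings and Moser-type composition estimates for $I(\rho)$ and $k(\rho)$). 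The outcome is a bound of the form $|\mathcal{N}_\sigma|\leq C\sqrt{E_0+E_1}\,(D_0+D_1)$, which, under the smallness $E_0(0)\leq\epsilon_0$, is absorbed into $D_\sigma$ by a standard continuity/bootstrap argument; this both closes \eqref{ineq0} for $\sigma=0,1$ and extends the local solution to $[0,\infty)$.
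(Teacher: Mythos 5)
The paper gives no proof of Theorem \ref{th1}: it simply invokes ``the standard energy method and interpolation theory in \cite{2018Global}'', and your sketch --- local existence plus the $H^{2-\sigma}$ energy identity, the cross term $2\eta\langle\Lambda^\sigma u,\nabla\Lambda^\sigma\rho\rangle_{H^{1-\sigma}}$ to generate density dissipation, and Gagliardo--Nirenberg/Moser estimates to compensate for the failure of $H^1\hookrightarrow L^\infty$ in two dimensions --- is exactly that method, so your route coincides with the one the paper relies on. The only point to tighten is your final bound $|\mathcal{N}_\sigma|\le C\sqrt{E_0+E_1}\,(D_0+D_1)$: since $D_0\not\lesssim D_1$ (e.g.\ $\|\nabla u\|_{L^2}$ and $\|\tau\|_{L^2}$ appear in $D_0$ but not in $D_1$), this only closes the summed inequality, whereas \eqref{ineq0} is asserted, and later used (see \eqref{ineq1} and \eqref{ineq14}), for $\sigma=0$ and $\sigma=1$ separately, so you should upgrade the estimate to $|\mathcal{N}_\sigma|\le C\sqrt{E_0}\,D_\sigma$ for each $\sigma$, which your derivative-distribution scheme does yield after interpolating the intermediate norms.
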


Since the additional stress tensor $\tau$ does not decay fast enough, we failed to use the bootstrap argument as in \cite{Schonbek1985,Luo-Yin2}. Similar to \cite{He2009} and \cite{2018Global}, we consider the coupling effect between $(\rho,u ,\tau)$. By taking Fourier transform in \eqref{eq1} and using the Fourier splitting method, we obtain the initial $L^2$ decay rate in following Proposition.
\begin{prop}\label{prop1}
Let $(\rho_0,u_0,\tau_0)\in \dot{B}^{-1}_{2,\infty}$. Under the condition in Theorem \ref{th1},
for any $l\in N^{+}$, then there exists a constant $C$ such that
\begin{align}\label{decay1}
E_0(t)\leq C\ln^{-l}(e+t),~~~~E_1(t)\leq C(1+t)^{-1}\ln^{-l}(e+t).
\end{align}
\end{prop}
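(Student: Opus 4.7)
The plan is to combine Schonbek's Fourier-splitting method with a bootstrap on the exponent $l$, using the $\dot{B}^{-1}_{2,\infty}$ hypothesis on the initial data to control low frequencies and the energy inequality of Theorem~\ref{th1} to drive the induction.

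First I would rewrite the dissipation $D_0$ in Fourier variables. Since $D_0$ controls $|\xi|^2(|\hat\rho|^2+|\hat u|^2)$ together with the full mass $|\hat\tau|^2$ through the damping term, a time-dependent splitting radius $f(t)\in(0,1]$ gives
\begin{equation*}
D_0 \;\ge\; c\,f(t)^2\int_{|\xi|\ge f(t)}|(\hat\rho,\hat u,\hat\tau)|^2\,d\xi,
\end{equation*}
because on $\{|\xi|\ge f(t)\}$ one has $|\xi|^2\ge f(t)^2$ and $1\ge f(t)^2$. Combined with $\frac{d}{dt}E_0+D_0\le 0$ from Theorem~\ref{th1}, this yields the auxiliary inequality
\begin{equation}\label{proppointaux}
\frac{d}{dt}E_0 + c f(t)^2 E_0 \;\le\; C f(t)^2 I(t), \qquad I(t):=\int_{|\xi|\le f(t)}|(\hat\rho,\hat u,\hat\tau)|^2\,d\xi.
\end{equation}

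The central step is to control $I(t)$. Writing the linearised Fourier system as $\partial_t\hat v=L(\xi)\hat v+\hat N$ with $v=(\rho,u,\tau)$, the cancellation of the antisymmetric linear coupling between $\hat\rho$, $\hat u$ and $\hat\tau$ in the spirit of \cite{He2009} and \cite{2018Global} produces a pointwise Duhamel bound of the form $|\hat v(t,\xi)|\le C|\hat v_0(\xi)|+C\int_0^t|\hat N(s,\xi)|\,ds$ for $|\xi|\le 1$. The hypothesis $v_0\in\dot{B}^{-1}_{2,\infty}$ immediately gives $\int_{|\xi|\le f(t)}|\hat v_0|^2\,d\xi\le Cf(t)^2$. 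For the nonlinear contribution, Cauchy--Schwarz in time together with $\int_{|\xi|\le f(t)}|\hat N|^2\,d\xi\le C f(t)^2\|N\|_{\dot{B}^{-1}_{2,\infty}}^2$ and the embedding $L^1\hookrightarrow\dot{B}^{-1}_{2,\infty}$ of Lemma~\ref{Lemma} reduces matters to $\|N(s)\|_{L^1}\lesssim E_0(s)^{1/2}D_0(s)^{1/2}$, which is integrable in time thanks to the dissipation budget $\int_0^\infty D_0(s)\,ds\le E_0(0)$.

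The bootstrap then proceeds by induction on $k=0,1,\dots,l$. The base case $k=0$ is the uniform bound $E_0(t)\le E_0(0)$. Assuming $E_0(s)\le C_k\ln^{-k}(e+s)$ on $[0,t]$, one chooses $f(t)^2=\alpha\bigl[(1+t)\ln(e+t)\bigr]^{-1}$ with $\alpha$ large and multiplies \eqref{proppointaux} by the integrating factor $\bigl[\ln(e+t)\bigr]^{c\alpha}$; plugging the inductive bound into the nonlinear part of $I(t)$ and integrating upgrades the decay to $E_0(t)\le C_{k+1}\ln^{-(k+1)}(e+t)$. After $l$ iterations we obtain $E_0(t)\le C\ln^{-l}(e+t)$. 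The $E_1$ estimate follows from the same scheme applied at level $\sigma=1$: since $D_1$ controls one additional full derivative of $(\rho,u)$, the Fourier-splitting estimate produces the extra factor $(1+t)^{-1}$.

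The main obstacle is closing the bootstrap at low frequencies. Because $\tau$ enjoys no spatial dissipation, its low-frequency mass is not absorbed by the dissipation of $(\rho,u)$ alone, and only after the antisymmetric linear coupling has been cancelled does the combined parabolic--damping structure suffice to bound $I(t)$. Moreover, in two dimensions $H^1\not\hookrightarrow L^\infty$, so the product estimates for the quadratic nonlinearities $u\cdot\nabla u$, $I(\rho)(\Delta+\nabla\mathrm{div})u$ and $g_b(\tau,\nabla u)$ must be performed with the Gagliardo--Nirenberg interpolations of Lemma~\ref{Lemma0}, and it is precisely this borderline embedding that prevents us from extracting more than a logarithmic decay rate at this first stage.
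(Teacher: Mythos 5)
Your overall architecture matches the paper's: Fourier splitting over a shrinking low-frequency ball, a low-frequency Duhamel/energy bound in $\xi$ exploiting the cancellation of the antisymmetric linear coupling, the $\dot{B}^{-1}_{2,\infty}$ hypothesis to handle the initial data, and a bootstrap on the logarithmic exponent. The gap sits in the one step that carries all of the difficulty: your claim that the nonlinear source is integrable in time, i.e.\ that $\int_0^\infty\|N(s)\|_{L^1}\,ds<\infty$ because $\|N\|_{L^1}\lesssim E_0^{1/2}D_0^{1/2}$ and $\int_0^\infty D_0\,ds\le E_0(0)$. This does not follow: Cauchy--Schwarz only gives
\[
\int_0^t E_0^{1/2}D_0^{1/2}\,ds\;\le\;\Bigl(\int_0^t E_0\,ds\Bigr)^{1/2}\Bigl(\int_0^t D_0\,ds\Bigr)^{1/2}\;\le\;C(1+t)^{1/2}
\]
when all that is known is $E_0\le C$, and nothing available at this stage makes $E_0^{1/2}D_0^{1/2}$ integrable on $(0,\infty)$. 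Establishing $\int_0^t\|(F,G,H)\|_{\dot{B}^{-\sigma}_{2,\infty}}\,ds\le C$ is precisely the content of the paper's Lemma \ref{Lemma1} (see \eqref{ineq33}), and it requires the polynomial decay $E_0\lesssim(1+t)^{-\alpha}$, $E_1\lesssim(1+t)^{-\alpha-1}$ as \emph{input}; it is not at your disposal while proving Proposition \ref{prop1}. If your integrability claim were true you would get $I(t)\lesssim f(t)^2$ outright and hence polynomial decay in one stroke, which would render the entire multi-stage scheme of Section 3 unnecessary --- a sign that the step cannot be right.

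The repair is what the paper does in \eqref{ineq7}: accept the growth $\int_0^t(\|u\|^2_{L^2}+\|\rho\|^2_{L^2})D_0^{1/2}\,ds\le C(1+t)^{1/2}$ and beat it with the measure of the low-frequency ball, choosing the radius so that the resulting factor is $\sqrt{f'/f}\,(1+t)^{1/2}\sim\ln^{-1/2}(e+t)$; this is exactly why only a logarithmic gain is available per pass and why the induction on $l$ is needed (each pass feeds the improved $E_0$ decay back into the time integral, as in \eqref{ineq11}, to gain one more logarithm). Your closing diagnosis --- that the failure of $H^1\hookrightarrow L^\infty$ in two dimensions is what caps the rate at logarithmic --- is not the right obstruction; the obstruction is the lack of time-integrability of the Duhamel source just described. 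With the growth bookkeeping corrected your integrating-factor bootstrap does close, but you should also track the piece $f'(t)\|\nabla^2\rho\|^2_{L^2}$ of $E_0$ that is \emph{not} dominated by $f(t)^2$ times the corresponding piece of $D_0$ (since $D_0$ contains only $\|\nabla\rho\|^2_{H^1}$); the paper carries this term explicitly on the right-hand side of \eqref{ineq2} and absorbs it using the dissipation budget, and the same care is needed in the $\sigma=1$ step.
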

\begin{proof}
Taking $\sigma=0$ in \eqref{ineq0}, we first have the following global energy estimation:
\begin{align}\label{ineq1}
\frac d {dt} E_0(t)+D_0(t)\leq 0.
\end{align}
Define $S_0(t)=\{\xi:|\xi|^2\leq 2C_2\frac {f'(t)} {f(t)}\}$ with $f(t)=\ln^{3}(e+t)$ and $C_2$ large enough. Applying Schonbek's \cite{Schonbek} strategy to \eqref{ineq1}, we obtain
\begin{align}\label{ineq2}
&\frac d {dt} [f(t)E_0(t)]+C_2 f'(t)(\frac 1 2\|u\|^2_{H^{2}}+\eta\gamma\|\rho\|^2_{H^{1}})
+f(t)\|\tau\|^2_{H^{2}}  \\ \notag
&\leq Cf'(t)\int_{S_0(t)}|\hat{u}(\xi)|^2+|\hat{\rho}(\xi)|^2d\xi+2f'(t)\|\nabla^2 \rho\|^2_{L^2}.
\end{align}

The $L^2$ estimate to the low frequency part of $(\rho,u)$ play a key role in studying time decay rates. Applying Fourier transform to \eqref{eq1}, we obtain
\begin{align}\label{eq2}
\left\{
\begin{array}{ll}
\hat{\rho}_t+i\xi_{k} \hat{u}^k=\hat{F},  \\[1ex]
\hat{u}^{j}_t+\frac 1 2|\xi|^2 \hat{u}^j+\frac 1 2 \xi_{j} \xi_{k} \hat{u}^k+i\xi_{j} \gamma\hat{\rho}-i\xi_{k} \hat{\tau}^{jk}=\hat{G}^j,  \\[1ex]
\hat{\tau}^{jk}_t+\hat{\tau}^{jk}-\frac i 2(\xi_{k} \hat{u}^j+\xi_{j} \hat{u}^k)=\hat{H}^{jk}.\\[1ex]
\end{array}
\right.
\end{align}
One can verify that
$$\mathcal{R}e[i\xi\cdot\hat{u}\bar{\hat{\rho}}]+\mathcal{R}e[\hat{\rho}i\xi\cdot\bar{\hat{u}}]
=\mathcal{R}e[i\xi\otimes\bar{\hat{u}}(t,\xi):\hat{\tau}]+\mathcal{R}e[i\xi\otimes\hat{u}:\bar{\hat{\tau}}]=0.$$
Since $\tau$ is symmetric, using \eqref{eq2}, we deduce that
\begin{align}\label{eq3}
&\frac 1 2 \frac d {dt} (\gamma|\hat{\rho}|^2+|\hat{u}|^2+|\hat{\tau}|^2)+\frac 1 2|\xi|^2 |\hat{u}|^2+\frac 1 2 |\xi\cdot\hat{u}|^2
+|\hat{\tau}|^2  \\ \notag
&=\mathcal{R}e[\gamma\hat{F}\bar{\hat{\rho}}]+\mathcal{R}e[\hat{G}\cdot\bar{\hat{u}}]+\mathcal{R}e[\hat{H}\bar{\hat{\tau}}].
\end{align}
Integrating \eqref{eq3} over $[0,t]$ with $s$, we obtain
\begin{align}\label{ineq3}
|\hat{\rho}|^2+|\hat{u}|^2+|\hat{\tau}|^2
\leq C(|\hat{\rho}_0|^2+|\hat{u}_0|^2+|\hat{\tau}_0|^2)+C\int_{0}^{t}|\hat{F}\cdot\bar{\hat{\rho}}|+|\hat{G}\cdot\bar{\hat{u}}|+|\hat{H}|^2 ds .
\end{align}
Integrating \eqref{ineq3} over $S_0(t)$ with $\xi$, then we have the following estimation to \eqref{eq2}:
\begin{align}\label{ineq4}
\int_{S_0(t)}|\hat{\rho}|^2+|\hat{u}|^2+|\hat{\tau}|^2 d\xi
&\leq C\int_{S_0(t)} |\hat{\rho}_0|^2+|\hat{u}_0|^2+|\hat{\tau}_0|^2 d\xi \\ \notag
&+C\int_{S_0(t)}\int_{0}^{t}|\hat{F}\cdot\bar{\hat{\rho}}|+|\hat{G}\cdot\bar{\hat{u}}|+|\hat{H}|^2 dsd\xi.
\end{align}
If $E_0(0)<\infty$ and $(\rho_0,u_0,\tau_0)\in \dot{B}^{-1}_{2,\infty}$, applying Proposition \ref{pro0}, we have
\begin{align}\label{ineq5}
\int_{S_0(t)}(|\hat{\rho}_0|^2+|\hat{u}_0|^2+|\hat{\tau}_0|^2)d\xi
&\leq\sum_{j\leq \log_2[\frac {4} {3}C_2^{\frac 1 2 }\sqrt{\frac {f'(t)}{f(t)}}]}\int_{\mathbb{R}^{2}} 2\varphi^2(2^{-j}\xi)(|\hat{\rho}_0|^2+|\hat{u}_0|^2+|\hat{\tau}_0|^2)d\xi  \\ \notag
&\leq\sum_{j\leq \log_2[\frac {4} {3}C_2^{\frac 1 2 }\sqrt{\frac {f'(t)}{f(t)}}]}C(\|\dot{\Delta}_j u_0\|^2_{L^2}+\|\dot{\Delta}_j \rho_0\|^2_{L^2}+\|\dot{\Delta}_j \tau_0\|^2_{L^2}) \\ \notag
&\leq C\|(\rho_0,u_0,\tau_0)\|^2_{\dot{B}^{-1}_{2,\infty}}\sum_{j\leq\log_2[\frac {4} {3}C_2^{\frac 1 2 }\sqrt{\frac {f'(t)}{f(t)}}]}2^{2j} \\ \notag
&\leq C\frac {f'(t)}{f(t)}\|(\rho_0,u_0,\tau_0)\|^2_{\dot{B}^{-1}_{2,\infty}}.
\end{align}
By Minkowski's inequality and \eqref{ineq0}, we get
\begin{align}\label{ineq6}
\int_{S_0(t)}\int_{0}^{t}|\hat{H}|^2 dsd\xi&=\int_{0}^{t}\int_{S_0(t)}|\hat{H}|^2 d\xi ds  \\ \notag
&\leq C\frac {f'(t)}{f(t)} \int_{0}^{t}\|u\|^2_{L^{2}}\|\nabla \tau\|^2_{L^{2}}+\|\nabla u\|^2_{L^{2}}\|\tau\|^2_{L^{2}}ds \\ \notag
&\leq C\frac {f'(t)}{f(t)},
\end{align}
and
\begin{align}\label{ineq7}
\int_{S_0(t)}\int_{0}^{t}|\hat{F}\cdot\bar{\hat{\rho}}|+|\hat{G}\cdot\bar{\hat{u}}|  dsd\xi
&=\int_{0}^{t}\int_{S_0(t)}|\hat{F}\cdot\bar{\hat{\rho}}|+|\hat{G}\cdot\bar{\hat{u}}| d\xi ds  \\ \notag
&\leq C\sqrt{\frac {f'(t)} {f(t)}} \int_{0}^{t}(\|u\|^2_{L^{2}}+\|\rho\|^2_{L^{2}})D_0(s)^{\frac 1 2}ds  \\ \notag
&\leq C\sqrt{\frac {f'(t)} {f(t)}}(1+t)^{\frac 1 2}.
\end{align}
It follows from \eqref{ineq4}-\eqref{ineq7} that
\begin{align}\label{ineq8}
\int_{S_0(t)}|\hat{\rho}|^2+|\hat{u}|^2 d\xi\leq C\ln^{-\frac 1 2}(e+t).
\end{align}
According to \eqref{ineq2} and \eqref{ineq8}, we deduce that
\begin{align}\label{ineq9}
\frac d {dt} [f(t)E_0(t)]\leq Cf'(t)\ln^{-\frac 1 2}(e+t)+2f'(t)\|\nabla^2 \rho\|^2_{L^2},
\end{align}
which implies that
\begin{align*}
f(t)E_0(t)\leq C+C\int_{0}^{t}f'(s)\ln^{-\frac 1 2}(e+s)ds+\int_{0}^{t}f'(s)\|\nabla^2 \rho\|^2_{L^2}ds \leq C\ln^{\frac 5 2}(e+t).
\end{align*}
Then we obtain the initial time decay rate:
\begin{align}\label{ineq10}
E_0(t)\leq C\ln^{-\frac 1 2}(e+t).
\end{align}
We improve the $L^2$ decay rate in \eqref{ineq10} by using the bootstrap argument.
According to \eqref{ineq7} and \eqref{ineq10}, we obtain
\begin{align}\label{ineq11}
\int_{S_0(t)}\int_{0}^{t}|\hat{F}\cdot\bar{\hat{\rho}}|+|\hat{G}\cdot\bar{\hat{u}}|  dsd\xi &\leq C\sqrt{\frac {f'(t)} {f(t)}} \int_{0}^{t}(\|u\|^2_{L^{2}}+\|\rho\|^2_{L^{2}})D_0(s)^{\frac 1 2}ds \\ \notag
&\leq C\sqrt{\frac {f'(t)} {f(t)}}(1+t)^{\frac 1 2}\ln^{-\frac 1 2}(e+t),
\end{align}
where we have used the fact that
\begin{align*}
\lim_{t\rightarrow\infty}\frac {\int_{0}^{t}\ln^{-1}(e+s)ds} {(1+t)\ln^{-1}(e+t)}=\lim_{t\rightarrow\infty}\frac {\ln^{-1}(e+t)} {\ln^{-1}(1+t)-\ln^{-2}(e+t)}  =1.
\end{align*}
Then the proof of \eqref{ineq8} implies that
\begin{align}\label{ineq12}
\int_{S_0(t)}|\hat{\rho}|^2+|\hat{u}|^2 d\xi\leq C\ln^{-1}(e+t).
\end{align}
According to \eqref{ineq2} and \eqref{ineq12}, we obtain
\begin{align*}
E_0\leq C\ln^{-1}(e+t).
\end{align*}
By virtue of the bootstrap argument, for any $l\in N^{+}$, we can deduce that
\begin{align}\label{ineq13}
E_0\leq C\ln^{-l}(e+t).
\end{align}
Taking $\sigma=1$ in \eqref{ineq0}, we have
\begin{align}\label{ineq14}
\frac d {dt} E_1+D_1\leq 0,
\end{align}
which implies that
\begin{align}\label{ineq15}
&\frac d {dt} [f(t)E_1]+C_2 f'(t)(\frac 1 2\|\nabla u\|^2_{H^{1}}+\eta\gamma\|\nabla \rho\|^2_{L^{2}})
+f(t)\|\nabla \tau\|^2_{H^{1}(\mathcal{L}^{2})}  \\ \notag
&\leq Cf'(t)\int_{S_0(t)}|\xi|^2(|\hat{u}(\xi)|^2+|\hat{\rho}(\xi)|^2) d\xi+2f'(t)\|\nabla^2 \rho\|^2_{L^{2}}.
\end{align}
According to \eqref{ineq13}, we get
\begin{align}\label{ineq16}
f'(t)\int_{S_0(t)}|\xi|^2(|\hat{u}(\xi)|^2+|\hat{\rho}(\xi)|^2) d\xi\leq C (1+t)^{-2}\ln^{-l+1}(e+t).
\end{align}
It follows from \eqref{ineq13}-\eqref{ineq16} that
\begin{align*}
(1+t)\ln^{l+1}(e+t)E_1&\leq C+C\ln(e+t)+C\int_{0}^{t}\ln^{l}(e+s)\|\nabla^2 \rho\|^2_{L^{2}}ds+C\int_{0}^{t}\ln^{l+1}(e+s)E_1ds \\
&\leq C\ln(e+t)+C\int_{0}^{t}\ln^{l+1}(e+s)D_0 ds  \\
&\leq C\ln(e+t)+C\int_{0}^{t}(1+s)^{-1}\ln^{l}(e+s)E_0 ds \\
&\leq C\ln(e+t),
\end{align*}
which implies that
\begin{align}\label{ineq17}
E_1\leq C (1+t)^{-1}\ln^{-l}(e+t).
\end{align}
We thus complete the proof of Proposition \ref{prop1}.
\end{proof}

By virtue of the time weighted energy estimate and the improved Fourier splitting method, one can not obtain the optimal decay rate. However, we can obtain a weak result as follow.
\begin{prop}\label{prop2}
Under the condition in Proposition \ref{prop1}, then there exists a constant $C$ such that
\begin{align}\label{decay2}
E_0(t)\leq C(1+t)^{-\frac 1 2},
\end{align}
and
\begin{align}\label{decay3}
E_1(t)\leq C(1+t)^{-\frac 3 2}.
\end{align}
\end{prop}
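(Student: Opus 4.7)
The plan is to run the Fourier splitting argument of Proposition~\ref{prop1} with a polynomial time weight $f(t)=(1+t)^m$, taking $m=1$ for $E_0$ and $m=2$ for $E_1$, in place of the logarithmic weight used there. The splitting set is $S(t)=\{\xi:|\xi|^2\leq C_2 f'(t)/f(t)\}$ with $C_2$ sufficiently large so that the $2f'\|\nabla^2 \rho\|^2$ term in the analog of \eqref{ineq2} is absorbed into the dissipation on the left-hand side. The resulting weighted energy inequality is
\[
\frac{d}{dt}\bigl[f(t) E_0(t)\bigr] + (\text{good dissipation}) \leq C f'(t) \int_{S(t)}\bigl(|\hat u|^2 + |\hat\rho|^2\bigr)\,d\xi,
\]
and the crucial task is to establish the sharp low-frequency bound $\int_{S(t)}(|\hat u|^2 + |\hat\rho|^2)\,d\xi \leq C(1+t)^{-1/2}$.

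I would derive this bound by repeating the Duhamel-type expansion \eqref{ineq3}--\eqref{ineq4} and estimating the three resulting contributions. The initial-data contribution is controlled by $Cf'/f$ via the Besov hypothesis $(\rho_0,u_0,\tau_0)\in\dot{B}^{-1}_{2,\infty}$ exactly as in \eqref{ineq5}. The $|\hat H|^2$ contribution is bounded by $|S(t)|\int_0^t\|H(s)\|_{L^1}^2\,ds$ using $\|\hat H\|_{L^\infty}\leq \|H\|_{L^1}$ and the product estimate $\|H\|_{L^1}\lesssim \|u\|_{L^2}\|\nabla\tau\|_{L^2} + \|\tau\|_{L^2}\|\nabla u\|_{L^2}$; the log decay of Proposition~\ref{prop1} with $l$ chosen sufficiently large together with the integrability $\int_0^\infty (\|\nabla u\|_{L^2}^2+\|\nabla\tau\|_{L^2}^2)\,ds<\infty$ makes this time integral finite, so this piece is $\lesssim f'/f$. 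The cross-term contribution $\int_0^t(|\hat F\bar{\hat\rho}|+|\hat G\bar{\hat u}|)\,ds$ is bounded as in \eqref{ineq7} by $C\sqrt{f'/f}\int_0^t(\|u\|^2+\|\rho\|^2)D_0^{1/2}\,ds$, and then handled via Cauchy--Schwarz in time together with the log decay of Proposition~\ref{prop1}. Combining the three pieces gives the announced low-frequency bound, and integrating the weighted energy inequality produces $(1+t) E_0(t)\leq C(1+t)^{1/2}$, i.e.\ $E_0(t)\leq C(1+t)^{-1/2}$.

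For $E_1$, the analogous argument applied to the $\sigma=1$ energy inequality with $f(t)=(1+t)^2$ benefits from an extra factor $|\xi|^2\leq C_2 f'/f$ in the low-frequency integral, giving $\int_{S(t)}|\xi|^2(|\hat u|^2+|\hat\rho|^2)\,d\xi\leq C(1+t)^{-3/2}$; integration then yields $E_1(t)\leq C(1+t)^{-3/2}$. I expect the main obstacle to be the log-to-polynomial transition in the cross-term estimate: a direct Cauchy--Schwarz using only the logarithmic input of Proposition~\ref{prop1} produces a prefactor of order $\ln^{-l}(e+t)$, which for any fixed $l$ is not polynomial in $t$. Crossing this logarithmic barrier requires exploiting the arbitrariness of $l$ in Proposition~\ref{prop1} together with the time integrability $\int_0^\infty E_0 D_0\,ds<\infty$ (which follows from $\tfrac12 (E_0^2)'\leq -E_0 D_0$), and this upgrade from logarithmic to polynomial decay is the technical heart of the argument.
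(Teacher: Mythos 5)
Your setup coincides with the paper's: the same splitting set $S(t)=\{\xi:|\xi|^2\leq C_2(1+t)^{-1}\}$, the same Duhamel-type bound \eqref{ineq3}--\eqref{ineq4} on the low frequencies, and the same treatment of the initial-data contribution (via $\dot{B}^{-1}_{2,\infty}$) and of the $|\hat H|^2$ contribution. But the step you yourself flag as ``the technical heart'' --- converting the logarithmic decay of Proposition \ref{prop1} into the required polynomial bound on the cross term --- is precisely where your proposal has a genuine gap, and the two tools you offer do not close it. Exploiting ``the arbitrariness of $l$'' cannot work: for every fixed $l$, $\ln^{-l}(e+t)$ decays more slowly than any positive power of $(1+t)^{-1}$, so Cauchy--Schwarz in time with the input $E_0\lesssim \ln^{-l}(e+t)$ leaves you with $\int_0^t(\|u\|^2_{L^2}+\|\rho\|^2_{L^2})D_0^{1/2}\,ds\lesssim t^{1/2}\ln^{-l}(e+t)$, whereas you need this integral to be bounded; likewise $\int_0^\infty E_0D_0\,ds<\infty$ combined with Cauchy--Schwarz only yields $t^{1/2}\ln^{-l/2}(e+t)$, still unbounded, and iterating such estimates does not improve the exponent.

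The paper closes the loop by a different device: it never substitutes the known decay of $\|(\rho,u)\|_{L^2}^2$ into the cross term. It sets $N(t)=\sup_{0\leq s\leq t}(1+s)^{1/2}E_0(s)$, bounds $\|u\|_{L^2}^2+\|\rho\|_{L^2}^2\leq N(s)(1+s)^{-1/2}$ inside the time integral, and arrives at the self-referential inequality $N(t)\leq C+C\int_0^t N(s)(1+s)^{-1/2}\bigl(\|\nabla u\|_{H^1}+\|\nabla\rho\|_{L^2}+\|\nabla\tau\|_{L^2}\bigr)\,ds$. The logarithmic information from Proposition \ref{prop1} is used only on the gradient factor, through $E_1\lesssim (1+s)^{-1}\ln^{-l}(e+s)$, which makes the Gronwall kernel behave like $(1+s)^{-1}\ln^{-l/2}(e+s)$, integrable once $l>2$; Gronwall then gives $N(t)\leq C$, i.e.\ $E_0(t)\leq C(1+t)^{-1/2}$. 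Without this bootstrap (or an equivalent self-improving mechanism) your argument stalls at a logarithmic rate. The remainder of your outline --- the $E_1$ estimate with the extra factor $|\xi|^2\lesssim (1+t)^{-1}$ and the already-established $L^2$ decay --- then goes through essentially as in the paper.
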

\begin{proof}
Define $S(t)=\{\xi:|\xi|^2\leq C_2(1+t)^{-1}\}$ where the constant $C_2$ will be chosen later on. Using Schonbek's strategy, we split the phase space into two time-dependent domain:
$$\|\nabla u\|^2_{H^2}=\int_{S(t)}(1+|\xi|^{4})|\xi|^2|\hat{u}(\xi)|^2 d\xi+\int_{S(t)^c}(1+|\xi|^{4})|\xi|^2|\hat{u}(\xi)|^2 d\xi.$$
One can verify that
$$\frac {C_2} {1+t} \int_{S(t)^c}(1+|\xi|^{4})|\hat{u}(\xi)|^2 d\xi\leq\|\nabla u\|^2_{H^2},$$
and
$$\frac {C_2} {1+t} \int_{S(t)^c}(1+|\xi|^{2})|\hat{\rho}(\xi)|^2 d\xi\leq\|\nabla \rho\|^2_{H^{1}}.$$
By \eqref{ineq1}, we have
\begin{align}\label{ineq18}
\frac d {dt} E_0(t)+\frac { C_2} {2(1+t)}\| u\|^2_{H^2}+\frac {\eta\gamma C_2} {1+t}\|\rho\|^2_{H^{1}}
+\|\tau\|^2_{H^{2}}\leq \frac {CC_2} {1+t}\int_{S(t)}|\hat{u}(\xi)|^2+|\hat{\rho}(\xi)|^2 d\xi.
\end{align}
Integrating \eqref{ineq3} over $S(t)$ with $\xi$, then we obtain
\begin{align}\label{ineq19}
\int_{S(t)}|\hat{\rho}|^2+|\hat{u}|^2+|\hat{\tau}|^2d\xi
&\leq C\int_{S(t)} |\hat{\rho}_0|^2+|\hat{u}_0|^2+|\hat{\tau}_0|^2d\xi  \\ \notag
&+C\int_{S(t)}\int_{0}^{t}|\hat{F}\cdot\bar{\hat{\rho}}|+|\hat{G}\cdot\bar{\hat{u}}|+|\hat{H}|^2dsd\xi.
\end{align}
Using the fact that $(\rho_0,u_0,\tau_0)\in H^2\cap\dot{B}^{-1}_{2,\infty}$ and applying Proposition \ref{pro0}, we deduce that
\begin{align}
\int_{S(t)}|\hat{\rho}_0|^2+|\hat{u}_0|^2+|\hat{\tau}_0|^2 d\xi
&\leq\sum_{j\leq \log_2[\frac {4} {3}C_2^{\frac 1 2 }(1+t)^{-\frac 1 2}]}\int_{\mathbb{R}^{2}} 2\varphi^2(2^{-j}\xi)(|\hat{\rho}_0|^2+|\hat{u}_0|^2+|\hat{\tau}_0|^2)d\xi \\ \notag
&\leq\sum_{j\leq \log_2[\frac {4} {3}C_2^{\frac 1 2 }(1+t)^{-\frac 1 2}]}2(\|\dot{\Delta}_j u_0\|^2_{L^2}+\|\dot{\Delta}_j \rho_0\|^2_{L^2}+\|\dot{\Delta}_j \tau_0\|^2_{L^2}) \\ \notag
&\leq C\|(\rho_0,u_0,\tau_0)\|^2_{\dot{B}^{-1}_{2,\infty}}\sum_{j\leq \log_2[\frac {4} {3}C_2^{\frac 1 2 }(1+t)^{-\frac 1 2}]}2^{2j} \\ \notag
&\leq C(1+t)^{-1}\|(\rho_0,u_0,\tau_0)\|^2_{\dot{B}^{-1}_{2,\infty}}.
\end{align}
Thanks to Minkowski's inequality, we have
\begin{align}\label{ineq20}
&\int_{S(t)}\int_{0}^{t}|\hat{F}\cdot\bar{\hat{\rho}}|+|\hat{G}\cdot\bar{\hat{u}}|dsd\xi
\leq C(\int_{S(t)}d\xi)^{\frac 1 2} \int_{0}^{t}\|\hat{F}\cdot\bar{\hat{\rho}}\|_{L^{2}}+\|\hat{G}\cdot\bar{\hat{u}}\|_{L^{2}}ds \\ \notag
&\leq C(1+t)^{-\frac 1 2} \int_{0}^{t}(\|u\|^2_{L^{2}}+\|\rho\|^2_{L^{2}})(\|\nabla u\|_{H^{1}}+\|\nabla\rho\|_{L^{2}}+\|\nabla \tau\|_{L^{2}})ds.
\end{align}
Using \eqref{ineq0}, then we obtain
\begin{align}\label{ineq21}
\int_{S(t)}\int_{0}^{t}|\hat{H}|^2 dsd\xi &\leq C(1+t)^{-1} \int_{0}^{t}\|u\|^2_{L^{2}}\|\nabla \tau\|^2_{L^{2}}+\|\nabla u\|^2_{L^{2}}\|\tau\|^2_{L^{2}}ds \\ \notag
&\leq C(1+t)^{-1}.
\end{align}
It follows from \eqref{ineq19}-\eqref{ineq21} that
\begin{align}\label{ineq22}
&\int_{S(t)}|\hat{\rho}(t,\xi)|^2+|\hat{u}(t,\xi)|^2 d\xi\leq  C(1+t)^{-1}  \\ \notag
&+C(1+t)^{-\frac 1 2} \int_{0}^{t}(\|u\|^2_{L^{2}}+\|\rho\|^2_{L^{2}})(\|\nabla u\|_{H^{1}}+\|\nabla\rho\|_{L^{2}}+\|\nabla\tau\|_{L^{2}})ds.
\end{align}
According to \eqref{ineq18} and \eqref{ineq22}, we obtain
\begin{align*}
&\frac d {dt} E_0(t)+\frac {C_2} {2(1+t)}\| u\|^2_{H^2}+\frac {\eta\gamma C_2} {1+t}\|\rho\|^2_{H^{1}}
+\|\tau\|^2_{H^{2}}  \\ \notag
&\leq \frac {CC_2} {1+t}[(1+t)^{-1}+(1+t)^{-\frac 1 2} \int_{0}^{t}(\|u\|^2_{L^{2}}+\|\rho\|^2_{L^{2}})(\|\nabla u\|_{H^{1}}+\|\nabla\rho\|_{L^{2}}+\|\nabla\tau\|_{L^{2}})ds],
\end{align*}
which implies that
\begin{align}\label{ineq23}
(1+t)^{\frac 3 2}E_0(t)&\leq C\int_{0}^{t}\|\nabla^2 \rho\|^2_{L^{2}}(1+s)^{\frac 1 2}ds+C(1+t)^{\frac 1 2}  \\ \notag
&+C(1+t)\int_{0}^{t}(\|u\|^2_{L^{2}}+\|\rho\|^2_{L^{2}})(\|\nabla u\|_{H^{1}}+\|\nabla\rho\|_{L^{2}}+\|\nabla\tau\|_{L^{2}})ds\\ \notag
&\leq C(1+t)^{\frac 1 2}+C(1+t)\int_{0}^{t}(\|u\|^2_{L^{2}}+\|\rho\|^2_{L^{2}})(\|\nabla u\|_{H^{1}}+\|\nabla\rho\|_{L^{2}}+\|\nabla\tau\|_{L^{2}})ds.
\end{align}
Define $N(t)=\sup_{0\leq s\leq t}(1+s)^{\frac 1 2}E_0(s)$. According to \eqref{ineq23}, we get
\begin{align}\label{ineq24}
N(t)&\leq C+C\int_{0}^{t}(\|u\|^2_{L^{2}}+\|\rho\|^2_{L^{2}})(\|\nabla u\|_{H^{1}}+\|\nabla\rho\|_{L^{2}}+\|\nabla\tau\|_{L^{2}})ds  \\ \notag
&\leq C+C\int_{0}^{t}N(s)(1+s)^{-\frac 1 2}(\|\nabla u\|_{H^{1}}+\|\nabla\rho\|_{L^{2}}+\|\nabla\tau\|_{L^{2}})ds.
\end{align}
Applying Gronwall's inequality, Proposition \ref{prop1}, we obtain $N(t)\leq C$,
which implies that
\begin{align}\label{ineq25}
E_0\leq C(1+t)^{-\frac 1 2}.
\end{align}
By \eqref{ineq14}, we can deduce that
\begin{align}\label{ineq26}
&\frac d {dt} E_1+\frac { C_2} {1+t}(\frac 1 2\|\nabla u\|^2_{H^{1}}+\eta\gamma\|\nabla \rho\|^2_{L^{2}})
+\|\nabla \tau\|^2_{H^{1}}\leq \frac {CC_2} {1+t}\int_{S(t)}|\xi|^2(|\hat{u}(\xi)|^2+|\hat{\rho}(\xi)|^2) d\xi.
\end{align}
According to \eqref{ineq25}, we have
\begin{align*}
\frac {CC_2} {1+t}\int_{S(t)}|\xi|^2(|\hat{u}(\xi)|^2+|\hat{\rho}(\xi)|^2) d\xi\leq C{C_2}^2 (1+t)^{-2}(\|\rho\|^2_{L^2}+\|u\|^2_{L^2})\leq C (1+t)^{-\frac 5 2}.
\end{align*}
This together with \eqref{ineq1}, \eqref{ineq25} and \eqref{ineq26} ensure that
\begin{align}\label{ineq27}
(1+t)^{\frac 5 2}E_1&\leq C(1+t)+C\int_{0}^{t}\|\nabla^2 \rho\|^2_{L^{2}}(1+s)^{\frac 3 2}ds  \\ \notag
&\leq C(1+t)+C\int_{0}^{t}E_0(s)(1+s)^{\frac 1 2}ds   \\ \notag
&\leq C(1+t),
\end{align}
which implies that
\begin{align}\label{ineq28}
E_1\leq C(1+t)^{-\frac 3 2}.
\end{align}
Therefore, we complete the proof of Proposition \ref{prop2}.
\end{proof}

\begin{rema}
The proposition \ref{prop2} indicates that
$$\|\rho\|_{L^2}+\|u\|_{L^2}\leq C(1+t)^{-\frac 1 4}.$$
Combining with the CNS system, one can see that this is not the optimal time decay.
\end{rema}

By Proposition \ref{prop2}, we can prove that the solution of \eqref{eq1} belongs to
some Besov space with negative index \cite{Tong2017The}, which can be used to improve time decay rate.
\begin{lemm}\label{Lemma1}
Let $0<\alpha,\sigma\leq 1$ and $\sigma<2\alpha$. Assume that $(\rho_0,u_0,\tau_0)$ satisfy the condition in Proposition \ref{prop1}. If
\begin{align}\label{de0}
E_0(t)\leq C(1+t)^{-\alpha},~~~~E_1(t)\leq C(1+t)^{-\alpha-1},
\end{align}
then we have
\begin{align}\label{ineq29}
(\rho,u,\tau)\in L^{\infty}(0,\infty;\dot{B}^{-\sigma}_{2,\infty}).
\end{align}
\end{lemm}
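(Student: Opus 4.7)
The plan is to control $\sup_j 2^{-j\sigma}\|\dot{\Delta}_j(\rho,u,\tau)(t)\|_{L^2}$ uniformly in $t\ge 0$ via a bootstrap on $M(T):=\sup_{0\le t\le T}\|(\rho,u,\tau)(t)\|_{\dot B^{-\sigma}_{2,\infty}}$, split at $j=0$. For $j\ge 0$ the trivial bound $2^{-j\sigma}\|\dot{\Delta}_j(\rho,u,\tau)\|_{L^2}\le\|(\rho,u,\tau)\|_{L^2}$ together with \eqref{de0} suffices, so the real work is for low frequencies $j\le 0$.

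For those, I would revisit the pointwise Fourier inequality \eqref{ineq3}, multiply it by $\varphi^{2}(2^{-j}\xi)$, integrate in $\xi$, and apply Cauchy--Schwarz in $\xi$ to the cross terms to obtain
\begin{align*}
\|\dot{\Delta}_j(\rho,u,\tau)(t)\|_{L^2}^{2}
&\le C\|\dot{\Delta}_j(\rho_0,u_0,\tau_0)\|_{L^2}^{2}\\
&\quad+C\int_0^{t}\!\bigl(\|\dot{\Delta}_j F\|_{L^2}\|\dot{\Delta}_j\rho\|_{L^2}+\|\dot{\Delta}_j G\|_{L^2}\|\dot{\Delta}_j u\|_{L^2}+\|\dot{\Delta}_j H\|_{L^2}^{2}\bigr)\,ds.
\end{align*}
The initial-data piece is harmless: $(\rho_0,u_0,\tau_0)\in \dot B^{-1}_{2,\infty}$ gives $\|\dot{\Delta}_j(\rho_0,u_0,\tau_0)\|_{L^2}\lesssim 2^{j}$, so $2^{-2j\sigma}\|\dot{\Delta}_j(\rho_0,u_0,\tau_0)\|_{L^2}^{2}\lesssim 2^{2j(1-\sigma)}\le 1$ for $j\le 0$ and $\sigma\le 1$.

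The nonlinear integrands are handled by combining the quadratic-plus-derivative structure of $F$, $G$, $H$ with the 2D low-frequency Bernstein bound $\|\dot{\Delta}_j g\|_{L^2}\lesssim 2^{j}\|g\|_{L^1}$. For $F=-\mathrm{div}(\rho u)$ this yields both $\|\dot{\Delta}_j F\|_{L^2}\lesssim 2^{2j}\|\rho\|_{L^2}\|u\|_{L^2}\lesssim 2^{2j}(1+s)^{-\alpha}$ and, via an $L^{4}$-product and Gagliardo--Nirenberg (Lemma \ref{Lemma0}) applied with \eqref{de0}, $\|\dot{\Delta}_j F\|_{L^2}\lesssim 2^{j}(1+s)^{-\alpha-1/2}$; analogous double estimates are available for the remaining terms in $G$ (where one also invokes $H^{1}$-decay for the factor $\Delta u$ inside $I(\rho)(\Delta+\nabla\mathrm{div})u$) and for $H$. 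The pure quadratic $H$-block already gives $2^{-2j\sigma}\|\dot{\Delta}_j H\|_{L^2}^{2}\lesssim (1+s)^{-2\alpha-1}$, whose time integral is always finite.

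For the cross terms the bootstrap enters via $\|\dot{\Delta}_j\rho\|_{L^2}\lesssim M(T)\,2^{j\sigma}$, which I would interpolate against the $L^{2}$ decay $\|\rho\|_{L^2}\lesssim(1+s)^{-\alpha/2}$, while the two bounds for $\|\dot{\Delta}_j F\|_{L^2}$ are themselves interpolated with a second parameter. A two-parameter balance then produces a nonnegative exponent on $2^{j}$ for $j\le 0$ and a time-integrand of the form $(1+s)^{-\beta}$ with $\beta>1$; a direct computation shows that the two constraints are simultaneously solvable precisely when $\sigma<2\alpha$, yielding an estimate $M(T)\le C+CM(T)^{\theta}$ with $\theta<1$ and closing the bootstrap uniformly in $T$. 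The main obstacle is exactly this two-parameter balancing act---keeping the frequency exponent nonnegative for $j\le 0$ while holding the time exponent strictly above $1$---and the sharpness of the threshold $\sigma<2\alpha$ is what makes the argument delicate.
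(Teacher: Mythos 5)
Your overall architecture is the same as the paper's: a frequency-localized energy estimate (your version obtained by testing the Fourier inequality \eqref{ineq3} against $\varphi^2(2^{-j}\xi)$, the paper's by applying $\dot{\Delta}_j$ to \eqref{eq1} and doing the $L^2$ estimate as in \eqref{eq4}), a $2^{-j\sigma}$ weighting, and the observation that the decay hypotheses \eqref{de0} make the nonlinear forcing time-integrable under $\sigma<2\alpha$. The difference lies entirely in how the forcing is measured. The paper applies Lemma \ref{Lemma} with the intermediate exponent $p=\tfrac{2}{1+\sigma}$, so that $\|(F,G,H)\|_{\dot B^{-\sigma}_{2,\infty}}\lesssim\|(F,G,H)\|_{L^{2/(1+\sigma)}}$; H\"older with exponents $\tfrac{2}{\sigma}$ and $2$ plus Gagliardo--Nirenberg then give the single bound $(1+s)^{-(1+\alpha-\sigma/2)}$, integrable precisely when $\sigma<2\alpha$, and an inequality $M^2(t)\le CM^2(0)+CM(t)$ that is linear in $M$. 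You instead take the endpoint $p=1$ (Bernstein) and recover the missing $\sigma$-dependence by interpolating the solution factors $\|\dot{\Delta}_j\rho\|_{L^2}$, $\|\dot{\Delta}_ju\|_{L^2}$ between $M(T)2^{j\sigma}$ and the $L^2$ decay, closing via a sublinear power of $M(T)$. Both bootstraps close; the paper's choice of Lebesgue exponent is what makes the sharp threshold fall out in one line without any parameter balancing.

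One claim in your sketch does not hold as stated: the ``analogous double estimate'' $\|\dot{\Delta}_jG\|_{L^2}\lesssim2^{2j}(1+s)^{-\alpha}$ is unavailable, because $G$ is not in divergence form --- the terms $u\cdot\nabla u$, $I(\rho)(\Delta+\nabla\mathrm{div})u$ and $k(\rho)\nabla\rho$ carry no outer derivative for Bernstein to convert into a factor $2^{j}$. For the cross term $2^{-2j\sigma}\|\dot{\Delta}_jG\|_{L^2}\|\dot{\Delta}_ju\|_{L^2}$ you are then left with only $\|\dot{\Delta}_jG\|_{L^2}\lesssim2^{j}(1+s)^{-\alpha-\frac12}$, and your two constraints (nonnegative frequency exponent for $j\le0$, time exponent above one) are simultaneously solvable only for $\sigma<\tfrac{\alpha}{1-\alpha}$, which is strictly smaller than $2\alpha$ when $\alpha<\tfrac12$. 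This does not damage the applications in the paper, where the lemma is invoked only with $\alpha=\tfrac12$ and $\alpha=\tfrac58$ (for $\alpha\ge\tfrac12$ one has $\tfrac{\alpha}{1-\alpha}\ge2\alpha$, so $\sigma<2\alpha$ already implies your constraint), but it means the assertion that your balance works ``precisely when $\sigma<2\alpha$'' is overstated for the full range $0<\alpha\le1$ of the statement. To recover the sharp threshold for all admissible $(\alpha,\sigma)$ you should replace the endpoint bound on $G$ by the paper's $\|\dot{\Delta}_jG\|_{L^2}\lesssim2^{j\sigma}\|G\|_{L^{2/(1+\sigma)}}$.
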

\begin{proof}
Applying $\dot{\Delta}_j$ to \eqref{eq1}, we obtain
\begin{align}\label{eq4}
\left\{
\begin{array}{ll}
\dot{\Delta}_j\rho_t+div~\dot{\Delta}_j u=\dot{\Delta}_j F,  \\[1ex]
\dot{\Delta}_j u_t-\frac 1 2 (\Delta+\nabla div)\dot{\Delta}_j u+\gamma\nabla\dot{\Delta}_j \rho-div\dot{\Delta}_j\tau=\dot{\Delta}_j G,  \\[1ex]
\dot{\Delta}_j \tau_t+\dot{\Delta}_j \tau-\dot{\Delta}_j D(u)=\dot{\Delta}_j H. \\[1ex]
\end{array}
\right.
\end{align}
By virtue of the standard energy estimate for \eqref{eq4}, we can deduce that
\begin{align}\label{ineq30}
&\frac d {dt}(\gamma\|\dot{\Delta}_j \rho\|^2_{L^2}+\|\dot{\Delta}_j u\|^2_{L^2}+\|\dot{\Delta}_j \tau\|^2_{L^2})+\|\nabla\dot{\Delta}_j u\|^2_{L^2}+\|div \dot{\Delta}_j u\|^2_{L^2}+2\|\dot{\Delta}_j \tau\|^2_{L^2}   \\ \notag
&=2\int_{\mathbb{R}^{2}} \gamma\dot{\Delta}_j F\dot{\Delta}_j \rho dx+2\int_{\mathbb{R}^{2}} \dot{\Delta}_j G\dot{\Delta}_j u dx+2\int_{\mathbb{R}^{2}} \dot{\Delta}_j H\dot{\Delta}_j \tau dx   \\ \notag
&\leq C(\|\dot{\Delta}_j F\|_{L^2}\|\dot{\Delta}_j \rho\|_{L^2}+\|\dot{\Delta}_j G\|_{L^2}\|\dot{\Delta}_j u\|_{L^2}+\|\dot{\Delta}_j H\|_{L^2}\|\dot{\Delta}_j \tau\|_{L^2}).
\end{align}
Multiplying both sides of \eqref{ineq30} by $2^{-2j\sigma}$ and taking $l^\infty$ norm, we obtain
\begin{align}\label{ineq31}
&\frac d {dt}(\gamma\|\rho\|^2_{\dot{B}^{-\sigma}_{2,\infty}}+\|u\|^2_{\dot{B}^{-\sigma}_{2,\infty}}+\|\tau\|^2_{\dot{B}^{-\sigma}_{2,\infty}})  \\ \notag
&\leq C(\|F\|_{\dot{B}^{-\sigma}_{2,\infty}}\|\rho\|_{\dot{B}^{-\sigma}_{2,\infty}}+\|G\|_{\dot{B}^{-\sigma}_{2,\infty}}\|u\|_{\dot{B}^{-\sigma}_{2,\infty}}
+\|H\|_{\dot{B}^{-\sigma}_{2,\infty}}\|\tau\|_{\dot{B}^{-\sigma}_{2,\infty}}).
\end{align}
Define $M(t)=\sum_{0\leq s\leq t} \|\rho\|_{\dot{B}^{-\sigma}_{2,\infty}}+\|u\|_{\dot{B}^{-\sigma}_{2,\infty}}+\|\tau\|_{\dot{B}^{-\sigma}_{2,\infty}}$. According to \eqref{ineq31}, we deduce that
\begin{align}\label{ineq32}
M^2(t)&\leq CM^2(0)+CM(t)\int_0^{t}\|F\|_{\dot{B}^{-\sigma}_{2,\infty}}+\|G\|_{\dot{B}^{-\sigma}_{2,\infty}}+\|H\|_{\dot{B}^{-\sigma}_{2,\infty}}ds  .
\end{align}
Using \eqref{de0} and Lemmas \ref{Lemma}, \ref{Lemma0}, we obtain
\begin{align}\label{ineq33}
\int_0^{t}\|(F,G,H)\|_{\dot{B}^{-\sigma}_{2,\infty}}ds
&\leq C\int_0^{t}\|(F,G,H)\|_{L^{\frac 2 {\sigma+1}}}ds  \\ \notag
&\leq C\int_0^{t}(\|\rho\|_{L^{\frac 2 {\sigma}}}+\|u\|_{L^{\frac 2 {\sigma}}})(\|\nabla\rho\|_{L^2}+\|\nabla u\|_{H^1}+\|\nabla\tau\|_{L^2})ds   \\ \notag
&\leq C\int_0^{t}\|(\rho,u)\|^\sigma_{L^{2}}\|\nabla(\rho,u)\|^{1-\sigma}_{L^{2}}(\|\nabla\rho\|_{L^2}+\|\nabla u\|_{H^1}+\|\nabla\tau\|_{L^2})ds   \\ \notag
&\leq C\int_0^{t}(1+s)^{-(1+\alpha-\frac \sigma 2)}ds\leq C.
\end{align}
According to \eqref{ineq32} and \eqref{ineq33}, we have $M^2(t)\leq CM^2(0)+CM(t)$. By virtue of interpolation theory, we can deduce that $(\rho_0,u_0,\tau_0)\in \dot{B}^{-\sigma}_{2,\infty}$ with $0<\sigma\leq1$. Then $M^2(0)\leq C$ implies that $M(t)\leq C$.
\end{proof}

\begin{prop}\label{prop3}
Let $0<\beta,\sigma\leq 1$ and $\frac {1} {2} \leq\alpha$. Assume that $(\rho_0,u_0,\tau_0)$ satisfy the condition in Proposition \ref{prop1}.
For any $t\in [0,+\infty)$, if
\begin{align}\label{de1}
E_0(t)\leq C(1+t)^{-\alpha},~~~~E_1(t)\leq C(1+t)^{-\alpha-1},
\end{align}
and
\begin{align}\label{ineq34}
(\rho,u,\tau)\in L^{\infty}(0,\infty;\dot{B}^{-\sigma}_{2,\infty}),
\end{align}
then there exists a constant $C$ such that
\begin{align}\label{de2}
E_0(t)\leq C(1+t)^{-\beta}~~~~and~~~~E_1(t)\leq C(1+t)^{-\beta-1}
\end{align}
where $\beta<\frac {\sigma+1} {2}$ for $\alpha=\frac {1} {2}$ and $\beta=\frac {\sigma+1} {2}$ for $\alpha>\frac {1} {2}$.
\end{prop}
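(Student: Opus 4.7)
The plan is to run a refined Fourier splitting argument in the spirit of Proposition \ref{prop2}, now capitalizing on both the decay hypothesis $E_0\leq C(1+t)^{-\alpha}$ and the uniform Besov bound $(\rho,u,\tau)\in L^{\infty}(0,\infty;\dot{B}^{-\sigma}_{2,\infty})$. First I take $S(t)=\{\xi\in\mathbb{R}^2:|\xi|^2\leq C_2(1+t)^{-1}\}$ and reproduce the time-weighted differential inequality
$$E_0'(t)+\tfrac{C_2}{2(1+t)}\|u\|_{H^2}^2+\tfrac{\eta\gamma C_2}{1+t}\|\rho\|_{H^1}^2+\|\tau\|_{H^2}^2\leq \tfrac{CC_2}{1+t}\int_{S(t)}\bigl(|\hat u|^2+|\hat\rho|^2\bigr)d\xi,$$
exactly as in \eqref{ineq18}.

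Next I revisit the low-frequency mass $\int_{S(t)}(|\hat\rho|^2+|\hat u|^2+|\hat\tau|^2)\,d\xi$ via the integrated Fourier identity \eqref{ineq3}. The initial-data piece is controlled by the Besov assumption as in \eqref{ineq5}, giving $C(1+t)^{-\sigma}$. The improvement over Proposition \ref{prop2} comes from exploiting two additional ingredients in the nonlinear pieces: (i) the divergence structure of the Fourier symbols (for instance $\hat F=-i\xi\cdot\widehat{\rho u}$) supplies an extra factor $|\xi|\leq C(1+t)^{-1/2}$ on $S(t)$, so $\|\hat F\|_{L^\infty(S(t))}\leq C(1+t)^{-1/2}\|\rho\|_{L^2}\|u\|_{L^2}$; (ii) the uniform-in-$s$ Besov bound \eqref{ineq34} gives $\|\hat\rho(s,\cdot)\|_{L^2(S(t))}\leq C(1+t)^{-\sigma/2}$ at each $s$. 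Combining via Cauchy--Schwarz, inserting the decay rates $\|(\rho,u,\tau)(s)\|_{L^2}\lesssim(1+s)^{-\alpha/2}$ and $\|\nabla(\rho,u,\tau)(s)\|_{L^2}\lesssim(1+s)^{-(\alpha+1)/2}$ from \eqref{de1}, and integrating in $s$ yields a nonlinear low-frequency contribution of order $(1+t)^{-(\sigma+1)/2}$ (with a logarithmic loss at the borderline $\alpha=1/2$); the $\hat H$-piece is handled by Hausdorff--Young and contributes at most $(1+t)^{-1}$.

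The last step couples Schonbek's time-weighted integration with a bootstrap. Multiplying the inequality of the first step by a sufficiently high power of $(1+t)$ and introducing $N(t):=\sup_{0\leq s\leq t}(1+s)^{\beta}E_0(s)$ with $\beta=(\sigma+1)/2$, I substitute $\|(\rho,u)(s)\|_{L^2}^2\leq E_0(s)\leq N(s)(1+s)^{-\beta}$ inside the nonlinear estimate and invoke the global dissipation bound $\int_0^\infty D_0(s)\,ds<\infty$ from \eqref{ineq0}. This leads to a self-referential inequality of the form
$$N(t)\leq C+C\,N(t)\int_0^t(1+s)^{-\alpha-1/2}\bigl(\|\nabla u\|_{H^1}+\|\nabla\rho\|_{L^2}+\|\nabla\tau\|_{L^2}\bigr)ds.$$
For $\alpha>1/2$, Cauchy--Schwarz against $D_0^{1/2}$ gives a uniformly bounded integral, hence $N(t)\leq C$, i.e.\ $E_0(t)\leq C(1+t)^{-(\sigma+1)/2}$; at $\alpha=1/2$ a marginal logarithmic divergence forces the strict inequality $\beta<(\sigma+1)/2$. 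The companion $E_1$-bound follows by running exactly the same scheme starting from the $\sigma=1$ version of \eqref{ineq0}, using the fresh $E_0$-decay to handle the low-frequency term.

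The main obstacle is closing the bootstrap at the sharp exponent $(\sigma+1)/2$. Two balances are decisive: pairing the divergence factor $i\xi$ of the nonlinear Fourier symbols with the Besov-localized bound on $\hat\rho,\hat u$ is what drops the nonlinear low-frequency output from $(1+t)^{-\sigma}$ to $(1+t)^{-(\sigma+1)/2}$, and the threshold $\alpha\geq 1/2$ is precisely what makes the Gronwall integrand borderline integrable, which is why the equality $\beta=(\sigma+1)/2$ only holds in the strict regime $\alpha>1/2$. The tensorial nonlinearity $g_b(\tau,\nabla u)$ and the composition $I(\rho)$-terms appearing in $G$ need a few applications of Lemma \ref{Lemma0}, but they fit into the same structure.
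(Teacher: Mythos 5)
Your overall strategy is the paper's: Fourier splitting over $S(t)=\{|\xi|^2\leq C_2(1+t)^{-1}\}$, Cauchy--Schwarz on $S(t)$ to convert $\int_{S(t)}|\hat F||\hat\rho|\,d\xi$ into $\|F\|_{L^1_x}\,|S(t)|^{1/2}\,\|\hat\rho\|_{L^2(S(t))}$, the uniform bound $\|\hat\rho(s,\cdot)\|_{L^2(S(t))}\lesssim (1+t)^{-\sigma/2}M(t)$ coming from \eqref{ineq34}, and $\|F\|_{L^1}+\|G\|_{L^1}\lesssim (1+s)^{-\alpha-\frac12}$ from \eqref{de1}; this is exactly \eqref{ineq36}, and your accounting of the borderline case $\alpha=\frac12$ versus $\alpha>\frac12$ is the right one. (The ``extra $|\xi|$ from divergence structure'' is a red herring --- several terms of $G$, e.g.\ $I(\rho)\,div\,\tau$ and $k(\rho)\nabla\rho$, are not divergences --- but you do not need it, since the $|S(t)|^{1/2}$ factor already supplies the $(1+t)^{-1/2}$.)

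There is, however, one genuine gap: your bound $C(1+t)^{-\sigma}$ for the initial-data piece $\int_{S(t)}(|\hat\rho_0|^2+|\hat u_0|^2+|\hat\tau_0|^2)\,d\xi$ is too weak. After Schonbek's time-weighting, a source term of size $(1+t)^{-\sigma}$ in the differential inequality caps the conclusion at $E_0\lesssim(1+t)^{-\sigma}$, and since $\sigma<\frac{\sigma+1}{2}$ whenever $\sigma<1$, this does not reach the claimed $\beta$. Concretely, in the first application inside the proof of Theorem \ref{th2} one has $\sigma=\frac12$, and your estimate would return $E_0\lesssim(1+t)^{-1/2}$ --- no improvement over Proposition \ref{prop2}. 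The fix is to use the hypothesis inherited from Proposition \ref{prop1}, namely $(\rho_0,u_0,\tau_0)\in\dot B^{-1}_{2,\infty}$, for the \emph{data} term (giving $(1+t)^{-1}\leq(1+t)^{-\frac{\sigma+1}{2}}$, as in \eqref{ineq35}), and reserve the weaker index $-\sigma$ for the \emph{solution} via $M(t)$. A secondary point: the closing Gronwall step with $N(t)=\sup_s(1+s)^{\beta}E_0(s)$ is both unnecessary (the hypothesis \eqref{de1} already supplies the decay of $\|(\rho,u)\|_{L^2}$ entering $\|F\|_{L^1}+\|G\|_{L^1}$, so the right-hand side is an explicit function of $t$ and no self-referential inequality arises) and, as written, does not close: from $N(t)\leq C+C\,N(t)\,K$ with $K$ merely bounded you cannot conclude $N(t)\leq C$ unless $CK<1$. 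Dropping that step and arguing directly, as the paper does, removes the issue.
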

\begin{proof}
According to the proof of Proposition \ref{prop2}, we obtain
\begin{align}\label{ineq35}
&\frac d {dt} E_0(t)+\frac  {C_2} {2(1+t)}\| u\|^2_{H^2}+\frac {\eta\gamma C_2} {1+t}\|\rho\|^2_{H^{1}}
+\|\tau\|^2_{H^{2}}  \\ \notag
&\leq \frac {CC_2} {1+t}((1+t)^{-1}+\int_{S(t)}\int_{0}^{t}|\hat{F}\cdot\bar{\hat{\rho}}|+|\hat{G}\cdot\bar{\hat{u}}|dsd\xi).
\end{align}
By virtue of \eqref{de1} and \eqref{ineq34}, we deduce that
\begin{align}\label{ineq36}
\int_{S(t)}\int_{0}^{t}|\hat{F}\cdot\bar{\hat{\rho}}|+|\hat{G}\cdot\bar{\hat{u}}|dsd\xi
&\leq C\int_{0}^{t}(\|F\|_{L^{1}}\int_{S(t)}|\hat{\rho}|d\xi+\|G\|_{L^{1}}\int_{S(t)}|\hat{u}|d\xi)ds \\ \notag
&\leq C(1+t)^{-\frac 1 2}\int_{0}^{t}(\|F\|_{L^{1}}+\|G\|_{L^{1}})(\int_{S(t)}|\hat{\rho}|^2+|\hat{u}|^2d\xi)^{\frac 1 2} ds  \\ \notag
&\leq C(1+t)^{-\frac {\sigma+1} {2}}M(t)\int_{0}^{t}\|F\|_{L^{1}}+\|G\|_{L^{1}}ds  \\ \notag
&\leq C(1+t)^{-\frac {\sigma+1} {2}}\int_{0}^{t}(1+s)^{-(\alpha+\frac 1 2)}ds  \\ \notag
&\leq C(1+t)^{-\beta}.
\end{align}
Using \eqref{ineq35} and \eqref{ineq36}, we obtain
\begin{align}\label{ineq37}
E_0(t)\leq C(1+t)^{-\beta}.
\end{align}
Recall that
\begin{align*}
\frac d {dt} E_1+\frac {C_2} {1+t}(\frac 1 2\|\nabla u\|^2_{H^{1}}+\eta\gamma\|\nabla \rho\|^2_{L^{2}})
+\|\nabla \tau\|^2_{H^{1}} \leq \frac {CC_2} {1+t}\int_{S(t)}|\xi|^2(|\hat{u}(\xi)|^2+|\hat{\rho}(\xi)|^2) d\xi.
\end{align*}
According to \eqref{ineq37}, we have
\begin{align*}
\frac {CC_2} {1+t}\int_{S(t)}|\xi|^2(|\hat{u}(\xi)|^2+|\hat{\rho}(\xi)|^2) d\xi\leq C{C_2}^2 (1+t)^{-2}(\|\rho\|^2_{L^2}+\|u\|^2_{L^2})\leq C (1+t)^{-2-\beta}.
\end{align*}
Then the proof of \eqref{ineq28} implies that $E_1\leq C(1+t)^{-1-\beta}$.
We thus finish the proof of Proposition \ref{prop3}.
\end{proof}

{\bf The proof of Theorem \ref{th2}:}  \\
We now improve the decay rate in Proposition \ref{prop2}. According to Proposition \ref{prop2} and Lemma \ref{Lemma1} with $\sigma=\alpha=\frac 1 2$, we obtain
\begin{align*}
(\rho,u,\tau)\in L^{\infty}(0,\infty;\dot{B}^{-\frac 1 2}_{2,\infty}).
\end{align*}
Taking advantage of Proposition \ref{prop3} with $\alpha=\sigma=\frac 1 2$ and $\beta=\frac 5 8$, we deduce that
\begin{align*}
E_0(t)\leq C(1+t)^{-\frac 5 8}~~~~and~~~~E_1(t)\leq C(1+t)^{-\frac 5 8-1}.
\end{align*}
Taking $\sigma=1$ and $\alpha=\frac 5 8$ in Lemma \ref{Lemma1}, we obtain
\begin{align*}
(\rho,u,\tau)\in L^{\infty}(0,\infty;\dot{B}^{-1}_{2,\infty}).
\end{align*}
Using Propositions \ref{prop3} again with $\alpha=\frac 5 8$ and $\sigma=\beta=1$, we verify that
\begin{align*}
E_0(t)\leq C(1+t)^{-1}~~~~and~~~~E_1(t)\leq C(1+t)^{-2}.
\end{align*}
To get the faster decay rate for $\tau$ in $L^2$, we need the following standard energy estimation for $\eqref{eq1}_3$:
\begin{align*}
\frac {1} {2}\frac {d} {dt} \|\tau\|^2_{L^2}+\|\tau\|^2_{L^2}\leq \int_{\mathbb{R}^{2}}D(u):\tau dx+\|\nabla u\|_{L^2}\|\tau\|^2_{L^4}.
\end{align*}
Using Lemma \ref{Lemma0}, we deduce that
\begin{align*}
\frac d {dt} \|\tau\|^2_{L^2}+\|\tau\|^2_{L^2}\leq C\|\nabla u\|^2_{L^2}(1+\|\nabla \tau\|^2_{L^2}),
\end{align*}
which implies that
\begin{align*}
\|\tau\|^2_{L^2}
&\leq \|\tau_0\|^2_{L^2}e^{-t}+C\int_{0}^{t}e^{-(t-s)}\|\nabla u\|^2_{L^2}(1+\|\nabla \tau\|^2_{L^2})ds  \\
&\leq C(e^{-t}+\int_{0}^{t}e^{-(t-s)}(1+s)^{-2}ds)  \\
&\leq C(1+t)^{-2}.
\end{align*}
We thus complete the proof of Theorem \ref{th2}.
\hfill$\Box$

\section{Optimal time decay rate for the compressible Hall-MHD equations}
This section is devoted to investigating the large time behaviour for the 2-D compressible Hall-MHD equations.
We introduce the energy and energy dissipation functionals for $(\rho,u,B)$ as follows:
$$\bar{E}_\sigma=\|\Lambda^\sigma(\rho,u,B)\|^2_{H^{2-\sigma}}+2\eta\langle \Lambda^\sigma u, \nabla \Lambda^\sigma\rho \rangle_{H^{1-\sigma}},$$
and
$$\bar{D}_\sigma=\eta\gamma\|\nabla\Lambda^\sigma\rho\|^2_{H^{1-\sigma}}+\|\nabla \Lambda^\sigma u\|^2_{H^{2-\sigma}}+\|div~\Lambda^\sigma u\|^2_{H^{2-\sigma}}+\|\nabla\Lambda^\sigma B\|^2_{H^{2-\sigma}},$$
where $\sigma=0$ or 1 and $\eta$ is a sufficiently small constant.

By virtue of the energy method and interpolation theory in \cite{2017MHDGlobal}, one can easily deduce that the global existence of strong solutions for \eqref{M1}. Thus we omit the proof here and present the following Theorem.
\begin{theo}\label{th3}
Let $d=2$. Let $(\rho,u,B)$ be a strong solution of \eqref{M1} with initial data $(\rho_0,u_0,B_0)\in H^2$. Then, there exists some sufficiently small constant $\epsilon_0$ such that if
\begin{align*}
\bar{E}_0(0)\leq \epsilon_0,
\end{align*}
then \eqref{eq1} admits a unique global strong solution $(\rho,u,B)$. Moreover, for $\sigma=0$ or 1 and any $t>0$, we have
\begin{align}\label{Mineq1}
\frac {d} {dt} \bar{E}_\sigma(t)+\bar{D}_\sigma(t)\leq 0.
\end{align}
\end{theo}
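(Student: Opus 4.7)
The plan is to reduce Theorem \ref{th3} to two ingredients: a local-in-time existence result in the $H^2$ strong-solution class, and the a priori differential energy inequality \eqref{Mineq1} valid as long as $\bar{E}_0(t)$ stays sufficiently small. A standard continuation/bootstrap argument then produces the unique global solution and yields \eqref{Mineq1} for both $\sigma=0$ and $\sigma=1$.

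For local well-posedness, I would work in $C([0,T];H^2)$ for $\rho$ and $C([0,T];H^2)\cap L^2(0,T;H^3)$ for $u,B$ via a Friedrichs-type approximation or a Banach fixed-point argument on the linearized system. The parabolic terms in the $u$- and $B$-equations supply the needed smoothing, while $H^2\hookrightarrow L^\infty$ in $d=2$ together with the smallness of $\rho$ keeps $1+\rho$ bounded away from $0$, so that the quasilinear Hall operator $curl[(curl B)\times B/(1+\rho)]$ is well defined. To derive \eqref{Mineq1}, I would apply $\Lambda^\sigma$ to \eqref{M1} and pair the three equations with $\gamma\Lambda^\sigma\rho$, $\Lambda^\sigma u$ and $\Lambda^\sigma B$ in $H^{2-\sigma}$, so that the skew-symmetric coupling between $\gamma\nabla\rho$ and $div\,u$ cancels and the parabolic parts produce the dissipation contributions listed in $\bar{D}_\sigma$. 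Since the principal part gives no dissipation for $\rho$, I would differentiate the cross term $2\eta\langle\Lambda^\sigma u,\nabla\Lambda^\sigma\rho\rangle_{H^{1-\sigma}}$ and use the $\gamma\nabla\rho$ piece of the momentum equation to generate the missing dissipation $\eta\gamma\|\nabla\Lambda^\sigma\rho\|^2_{H^{1-\sigma}}$, with the remaining terms absorbed by the velocity dissipation provided $\eta$ is small, a standard trick for the CNS system.

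The main obstacle is controlling the nonlinearities $F_1$, $G_1$, $H_1$. The CNS-type pieces $u\cdot\nabla u$, $I(\rho)(\Delta+\nabla div)u$, $k(\rho)\nabla\rho$, and the Lorentz force $(curl B)\times B/(1+\rho)$ are handled by Moser-type product and composition estimates in $H^2$ together with the 2D Gagliardo--Nirenberg inequalities from Lemma \ref{Lemma0}; their contributions are of order $\bar{E}_0^{1/2}\bar{D}_\sigma$ and hence absorbable by $\bar{D}_\sigma$ once $\epsilon_0$ is small. The genuinely delicate piece is the Hall term $curl[(curl B)\times B/(1+\rho)]$, which contains two derivatives of $B$, so at the $H^2$ level its energy estimate formally costs three derivatives. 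I would redistribute derivatives by integration by parts in the pairing against $\Lambda^{2\sigma}B$, exploit the divergence-free constraint $div\,B=0$ and the transport structure $(B\cdot\nabla)$ of the leading piece, and reduce the top-order contribution to an expression of the form $\int(1+\rho)^{-1}(B\cdot\nabla)\nabla^{2-\sigma}B\cdot\nabla^{2-\sigma}B\,dx$; one more integration by parts shifting $(B\cdot\nabla)$ bounds this by $\|(\nabla\rho,\nabla B)\|_{L^\infty}\|\nabla^{2-\sigma}B\|_{L^2}^2\lesssim\bar{E}_0^{1/2}\bar{D}_\sigma$, with lower-order commutators controlled by Kato--Ponce estimates and 2D Sobolev embedding.

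Once $\epsilon_0$ is chosen small enough for every absorption step to succeed, the combined estimates yield \eqref{Mineq1}. Integrating the $\sigma=0$ version gives $\bar{E}_0(t)\leq\bar{E}_0(0)\leq\epsilon_0$ uniformly in $t$, which prolongs the local solution to a global one via the standard continuation criterion and propagates smallness of $\bar{E}_0$ throughout. Applying the same energy procedure at one derivative higher then delivers \eqref{Mineq1} for $\sigma=1$, completing the proof.
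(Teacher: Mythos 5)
The paper does not actually prove Theorem \ref{th3}: it explicitly omits the argument and defers to the ``standard energy method and interpolation theory'' of \cite{2017MHDGlobal}, which is precisely the local-existence-plus-a-priori-estimate-plus-continuation scheme you outline, including the cross term $2\eta\langle\Lambda^\sigma u,\nabla\Lambda^\sigma\rho\rangle_{H^{1-\sigma}}$ used to generate the density dissipation. Your sketch is therefore consistent with the paper's (cited) approach; the one detail I would tighten is the Hall term, where the cleanest route is to keep $(curl B)\times B$ intact, move the outer $curl$ onto the test function in the pairing, and observe that the resulting top-order integrand $\big[(\nabla^{2}curl B)\times B\big]\cdot\nabla^{2}curl B/(1+\rho)$ vanishes pointwise by antisymmetry of the cross product (or, since $\nu=1$ here, is simply bounded by $\|B\|_{L^\infty}\|\nabla B\|^2_{H^2}\lesssim \bar{E}_0^{1/2}\bar{D}_\sigma$), whereas splitting off the transport part $(B\cdot\nabla)B$ as you propose destroys this pairing structure because the outer $curl$ mismatches the two top-order factors, and your intermediate bound via $\|\nabla B\|_{L^\infty}$ would additionally require a Gagliardo--Nirenberg interpolation since that norm is not controlled by $\bar{E}_0^{1/2}$ alone in two dimensions.
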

By taking Fourier transform in \eqref{M1} and using the Fourier splitting method, we obtain the initial $L^2$
decay rate in following Proposition.
\begin{prop}\label{prop7}
Let $d=2$. Under the condition in Theorem \ref{th4},
then for any $l\in N^{+}$, there exists a constant $C$ such that
\begin{align*}
\bar{E}_0(t)\leq C\ln^{-l}(e+t),~~~~\bar{E}_1(t)\leq C(1+t)^{-1}\ln^{-l}(e+t).
\end{align*}
\end{prop}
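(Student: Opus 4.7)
The plan is to mirror the argument of Proposition \ref{prop1} for the Hall-MHD system \eqref{M1}, replacing $\tau$ with $B$. The only structural novelty is that $B$ now carries full parabolic diffusion (so $\nabla B\in L^2_t(H^2)$) while $\tau$ had only zero-order damping, and the $H_1$ nonlinearity contains the second-order Hall term $\mathrm{curl}\bigl[\frac{(\mathrm{curl}\,B)\times B}{\rho+1}\bigr]$ that was absent in the Oldroyd case. I start from $\frac{d}{dt}\bar E_0+\bar D_0\le 0$ from \eqref{Mineq1}, set $f(t)=\ln^3(e+t)$ and $S_0(t)=\{\xi:|\xi|^2\le 2C_2 f'(t)/f(t)\}$, and perform the Schonbek decomposition exactly as in \eqref{ineq2}. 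The low-frequency loss for $B$ is absorbed via $\int_{S_0(t)^c}(1+|\xi|^4)|\hat B|^2\,d\xi\le\frac{f(t)}{C_2 f'(t)}\|\nabla B\|_{H^2}^2$, so one obtains an inequality of the same form as \eqref{ineq2} with $f(t)\|\tau\|_{H^2}^2$ replaced by $f(t)\|\nabla B\|_{H^2}^2$ and an additional $Cf'(t)\int_{S_0(t)}|\hat B|^2\,d\xi$ on the right.

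Taking the Fourier transform of \eqref{M1} produces
\begin{align*}
\hat\rho_t+i\xi_k\hat u^k&=\hat F_1,\\
\hat u^j_t+|\xi|^2\hat u^j+\xi_j\xi_k\hat u^k+i\xi_j\gamma\hat\rho&=\hat G_1^j,\\
\hat B^j_t+|\xi|^2\hat B^j&=\hat H_1^j.
\end{align*}
The $\rho$--$u$ coupling cancels exactly as in \eqref{eq3}, and $\hat B$ enters only diagonally; multiplying by conjugates and summing yields the pointwise identity
$$\tfrac12\tfrac{d}{dt}(\gamma|\hat\rho|^2+|\hat u|^2+|\hat B|^2)+\tfrac12|\xi|^2|\hat u|^2+\tfrac12|\xi\cdot\hat u|^2+|\xi|^2|\hat B|^2=\mathcal{R}e[\gamma\hat F_1\overline{\hat\rho}+\hat G_1\cdot\overline{\hat u}+\hat H_1\cdot\overline{\hat B}].$$
Integrating in time and then over $S_0(t)$ gives the analogue of \eqref{ineq4}, and the initial data term is handled exactly as in \eqref{ineq5} by the hypothesis $(\rho_0,u_0,B_0)\in\dot B^{-1}_{2,\infty}$, producing a contribution of size $C\frac{f'(t)}{f(t)}\|(\rho_0,u_0,B_0)\|_{\dot B^{-1}_{2,\infty}}^2$.

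The genuinely new work is the estimate of the nonlinearities. The pieces $F_1=-\mathrm{div}(\rho u)$ and the convective/pressure terms of $G_1$ are treated exactly as in \eqref{ineq6}--\eqref{ineq7} via $\bar D_0\in L^1_t$; the Lorentz force $(1+\rho)^{-1}(\mathrm{curl}\,B)\times B$ in $G_1$ and the induction term $\mathrm{curl}(u\times B)$ in $H_1$ yield $L^1$ norms controlled by $\|(u,B)\|_{L^2}\|\nabla(u,B)\|_{L^2}$, whose squares are integrable in time by $\bar E_0\le\epsilon_0$ and $\bar D_0\in L^1_t$. The main obstacle is the Hall term, which after expanding the curl yields contributions of type $\nabla^2 B\cdot B$, $|\nabla B|^2$ and $\nabla\rho\cdot\nabla B\cdot B$. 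Bounding these by H\"older together with the 2-D interpolation $\|\nabla B\|_{L^4}^2\le C\|\nabla B\|_{L^2}\|\nabla^2 B\|_{L^2}$ and the smallness $\|(\rho,B)\|_{H^2}\le C\sqrt{\epsilon_0}$, one obtains $\|H_1\|_{L^1}\le C\sqrt{\epsilon_0}\bigl(\|\nabla u\|_{L^2}+\|\nabla\rho\|_{L^2}+\|\nabla B\|_{L^2}+\|\nabla^2 B\|_{L^2}\bigr)$, hence $\int_0^t\|H_1\|_{L^1}^2\,ds\le C$ by $\bar D_0\in L^1_t$. Combining the three nonlinear contributions with $\int_0^t\|\nabla^2\rho\|_{L^2}^2\,ds\le C$ reproduces the analogue of \eqref{ineq9} and delivers the initial rate $\bar E_0(t)\le C\ln^{-1/2}(e+t)$.

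The remainder of the argument is then identical to that of Proposition \ref{prop1}. A bootstrap that reinjects the improved rate into the bilinear bound \eqref{ineq11} upgrades the exponent from $1/2$ to any $l\in\mathbb{N}^+$, yielding $\bar E_0(t)\le C\ln^{-l}(e+t)$. For $\bar E_1$, taking $\sigma=1$ in \eqref{Mineq1} and applying the same Fourier-splitting, the low-frequency remainder $f'(t)\int_{S_0(t)}|\xi|^2(|\hat\rho|^2+|\hat u|^2+|\hat B|^2)\,d\xi$ is $O((1+t)^{-2}\ln^{-l+1}(e+t))$ by the $\bar E_0$ decay just proved; integrating as in the last display of the proof of Proposition \ref{prop1} concludes $\bar E_1(t)\le C(1+t)^{-1}\ln^{-l}(e+t)$, completing the proposition.
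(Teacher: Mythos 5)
Your proposal is correct and follows essentially the same route as the paper, which itself proves Proposition \ref{prop7} only by deferring to the argument of Proposition \ref{prop1} and the detailed estimates later carried out in Proposition \ref{prop8}; your treatment of the two genuinely new points (the extra low-frequency term for $B$ forced by the parabolic dissipation $\|\nabla B\|^2_{H^2}$, and the $L^1$ bound on the Hall term via $\|\nabla B\|_{L^4}^2\leq C\|\nabla B\|_{L^2}\|\nabla^2B\|_{L^2}$ and smallness) matches the paper's own estimate $\|curl[\frac{(curlB)\times B}{\rho+1}]\|_{L^1}\leq C\|B\|_{L^2}\|\nabla B\|_{H^1}$ in Proposition \ref{prop8}. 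The only cosmetic slip is carrying over the factors $\tfrac12$ from the Oldroyd-B dissipation into the Fourier identity for \eqref{M1}, where the viscosity coefficients are $1$; this does not affect the argument.
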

\begin{proof}
Referring to the proof of Propositions \ref{prop1}, one can easily verify that Proposition \ref{prop7} is true. More details can be found in the following Proposition \ref{prop8}.
\end{proof}

By virtue of the improved Fourier splitting method, one can not obtain the optimal decay rate. However, we can obtain a weak result as follow.
\begin{prop}\label{prop8}
Under the condition in Theorem \ref{th4}, there exists a constant $C$ such that
\begin{align*}
\bar{E}_0(t)\leq C(1+t)^{-\frac 1 2},~~~~\bar{E}_1(t)\leq C(1+t)^{-\frac 3 2}.
\end{align*}
\end{prop}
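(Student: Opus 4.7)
The plan is to mirror the argument used to prove Proposition \ref{prop2}, replacing $\tau$ everywhere by the magnetic field $B$ and being careful with the Hall term in $H_1$. First, from the $\sigma=0$ energy inequality \eqref{Mineq1} I would introduce the Fourier splitting sphere $S(t)=\{\xi:|\xi|^2\leq C_2(1+t)^{-1}\}$ and use the dissipation $\bar{D}_0$ (which controls $\|\nabla u\|_{H^2}^2+\|\nabla\rho\|_{H^1}^2+\|\nabla B\|_{H^2}^2$) to absorb the high-frequency part of $\bar{E}_0$; observe that on $S(t)^c$ we have $(1+t)^{-1}(1+|\xi|^4)|\hat B|^2\leq C|\xi|^2(1+|\xi|^4)|\hat B|^2$, so the $\nabla B$-dissipation is enough. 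This produces
$$\frac{d}{dt}\bar{E}_0(t)+\frac{C_2}{2(1+t)}\bar{E}_0(t)\leq \frac{CC_2}{1+t}\int_{S(t)}|\hat\rho|^2+|\hat u|^2+|\hat B|^2\,d\xi+\text{lower order}.$$

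Next, I take the Fourier transform of \eqref{M1} and form the pointwise-in-$\xi$ energy identity
$$\tfrac12\tfrac{d}{dt}(\gamma|\hat\rho|^2+|\hat u|^2+|\hat B|^2)+\tfrac12|\xi|^2|\hat u|^2+\tfrac12|\xi\!\cdot\!\hat u|^2+|\xi|^2|\hat B|^2=\mathrm{Re}[\gamma\hat F_1\bar{\hat\rho}+\hat G_1\!\cdot\!\bar{\hat u}+\hat H_1\!\cdot\!\bar{\hat B}],$$
using the cancellations analogous to \eqref{eq3} (here $\mathrm{Re}[i\xi\!\cdot\!\hat u\bar{\hat\rho}+\hat\rho\, i\xi\!\cdot\!\bar{\hat u}]=0$). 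Integrating in time and then in $\xi$ over $S(t)$, the data term is bounded by $C(1+t)^{-1}\|(\rho_0,u_0,B_0)\|_{\dot B^{-1}_{2,\infty}}^2$ exactly as in \eqref{ineq5}. The term involving $\hat F_1\bar{\hat\rho}+\hat G_1\cdot\bar{\hat u}$ is handled as in \eqref{ineq20} via Minkowski, $|S(t)|^{1/2}\leq C(1+t)^{-1/2}$, and the energy inequality \eqref{Mineq1} together with Proposition \ref{prop7}.

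The main obstacle is the Hall contribution in $\hat H_1$: the term $\mathrm{curl}[((\mathrm{curl}B)\times B)/(\rho+1)]$ carries one more derivative than the other nonlinearities. To treat it, I would use that in Fourier variable the outer curl produces a factor $|\xi|$, and since $|\xi|\leq C(1+t)^{-1/2}$ on $S(t)$, this factor is harmless; the inner $\mathrm{curl}B$ is absorbed by the dissipation $\|\nabla B\|_{H^2}^2$ in $\bar D_0$ combined with the smallness of $B$ in $L^\infty_t H^2$. Writing schematically
$$\int_{S(t)}\!\!\int_0^t|\hat H_1\!\cdot\!\bar{\hat B}|\,ds\,d\xi\leq C(1+t)^{-1}\int_0^t\|B\|_{L^\infty}\|\nabla B\|_{L^2}(\|\nabla B\|_{L^2}+\|u\|_{L^2}+\|\nabla u\|_{L^2})\,ds\leq C(1+t)^{-1},$$
and similarly for $|\hat H_1|^2$, I get the analog of \eqref{ineq21}--\eqref{ineq22}.

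Combining these pieces yields $\tfrac{d}{dt}\bar E_0+\tfrac{C_2}{2(1+t)}\bar E_0\leq CC_2(1+t)^{-2}+CC_2(1+t)^{-3/2}\int_0^t(\|u\|_{L^2}^2+\|\rho\|_{L^2}^2)\bar D_0^{1/2}\,ds$. Setting $\bar N(t):=\sup_{0\leq s\leq t}(1+s)^{1/2}\bar E_0(s)$ and arguing exactly as in \eqref{ineq23}--\eqref{ineq25} (Gronwall combined with Proposition \ref{prop7}) gives $\bar E_0(t)\leq C(1+t)^{-1/2}$. Finally, for $\bar E_1$ I would repeat the phase-space splitting in the $\sigma=1$ energy inequality to obtain
$$\tfrac{d}{dt}\bar E_1+\tfrac{C_2}{1+t}\bar E_1\leq \tfrac{CC_2}{1+t}\int_{S(t)}|\xi|^2(|\hat u|^2+|\hat\rho|^2+|\hat B|^2)\,d\xi\leq C(1+t)^{-5/2},$$
following the derivation of \eqref{ineq27}--\eqref{ineq28}, and conclude $\bar E_1(t)\leq C(1+t)^{-3/2}$.
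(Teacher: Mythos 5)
Your overall architecture is exactly the paper's: Fourier splitting with $S(t)=\{\xi:|\xi|^2\leq C_2(1+t)^{-1}\}$, the pointwise Fourier energy identity, the $\dot B^{-1}_{2,\infty}$ bound on the data term, Minkowski's inequality for the nonlinear terms, and the $\bar N(t)$--Gronwall closure using Proposition \ref{prop7}; the $\bar E_1$ step is likewise identical.

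The one step that does not close as written is your bound
$\int_{S(t)}\int_0^t|\hat H_1\cdot\bar{\hat B}|\,ds\,d\xi\leq C(1+t)^{-1}$.
The extra factor $|\xi|$ from the outer curl does give the prefactor $(\int_{S(t)}|\xi|^2d\xi)^{1/2}\leq C(1+t)^{-1}$, and for the genuine Hall term $\mathrm{curl}[((\mathrm{curl}B)\times B)/(\rho+1)]$ the remaining time integral is indeed uniformly bounded because it is controlled by $\int_0^t\|\nabla B\|_{L^2}^2\,ds<\infty$. But $H_1$ also contains $\mathrm{curl}(u\times B)$, whose contribution after peeling off the curl is $\|u\|_{L^2}\|B\|_{L^2}$; at this stage of the argument these norms decay only logarithmically (Proposition \ref{prop7}), so $\int_0^t\|u\|_{L^2}\|B\|_{L^2}\,ds$ grows essentially linearly and your claimed uniform bound fails --- you would only recover $\bar E_0\lesssim \ln^{-l}(e+t)$ from that term, not $(1+t)^{-1/2}$. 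The paper avoids this by \emph{not} seeking a decay-in-$t$ bound for the $H_1$ contribution in isolation: it estimates $\|H_1\|_{L^1}\leq C\|(u,B)\|_{L^2}\|\nabla(u,B)\|_{H^1}$ (using $\|\mathrm{curl}[((\mathrm{curl}B)\times B)/(\rho+1)]\|_{L^1}\leq C\|B\|_{L^2}\|\nabla B\|_{H^1}$) and keeps the whole nonlinear contribution in the form $C(1+t)^{-1/2}\int_0^t\|(\rho,u,B)\|_{L^2}^2(\|\nabla u\|_{H^1}+\|\nabla\rho\|_{L^2}+\|\nabla B\|_{H^1})\,ds$, so that the quadratic factor $\|(\rho,u,B)\|_{L^2}^2\leq \bar N(t)(1+s)^{-1/2}$ feeds into the Gronwall step together with the logarithmic decay of the dissipation norms from Proposition \ref{prop7}. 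Your own Gronwall display also drops $\|B\|_{L^2}^2$ from the quadratic factor; it must be included since the induction equation's nonlinearity paired with $\hat B$ is quadratic in $(u,B)$. With the $\mathrm{curl}(u\times B)$ term rerouted through the $\bar N(t)$ argument (and $\|B\|_{L^2}^2$ restored), your proof coincides with the paper's.
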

\begin{proof}
Define $S(t)=\{\xi:|\xi|^2\leq C_2(1+t)^{-1}\}$ where the constant $C_2$ will be chosen later on.
Taking $\sigma=0$ in \eqref{Mineq1}, we can easily deduce that
\begin{align}\label{Mineq2}
\frac d {dt} \bar{E}_0(t)+\frac {C_2} {1+t}\|(u,B)\|^2_{H^2}+\frac {\eta\gamma C_2} {1+t}\|\rho\|^2_{H^{1}}
\leq \frac {CC_2} {1+t}\int_{S(t)}|\hat{u}(\xi)|^2+|\hat{\rho}(\xi)|^2+|\hat{B}(\xi)|^2 d\xi.
\end{align}
The $L^2$ estimate to the low frequency part of $(\rho,u,B)$ play a key role in studying time decay rates. Applying Fourier transform to \eqref{M1}, we obtain
\begin{align}\label{M2}
\left\{
\begin{array}{ll}
\hat{\rho}_t+i\xi_{k} \hat{u}^k=\hat{F}_1,  \\[1ex]
\hat{u}^{j}_t+|\xi|^2 \hat{u}^j+\xi_{j} \xi_{k} \hat{u}^k+i\xi_{j} \gamma\hat{\rho}=\hat{G}_1^j,  \\[1ex]
\hat{B}^{k}_t+|\xi|^2\hat{B}^{k}=\hat{H}_1^{k}.\\[1ex]
\end{array}
\right.
\end{align}
One can verify that
$$\mathcal{R}e[i\xi\cdot\hat{u}\bar{\hat{\rho}}]+\mathcal{R}e[\hat{\rho}i\xi\cdot\bar{\hat{u}}]=0.$$
According to \eqref{M2}, then we have
\begin{align}\label{Mineq3}
\int_{S(t)}|\hat{\rho}|^2+|\hat{u}|^2+|\hat{B}|^2d\xi &\leq C\int_{S(t)} |\hat{\rho}_0|^2+|\hat{u}_0|^2+|\hat{B}_0|^2d\xi \\ \notag
&+C\int_{S(t)}\int_{0}^{t}|\hat{F}_1\cdot\bar{\hat{\rho}}|
+|\hat{G}_1\cdot\bar{\hat{u}}|+|\hat{H}_1\cdot\bar{\hat{B}}| dsd\xi.
\end{align}
By $(\rho_0,u_0,B_0)\in H^2\cap\dot{B}^{-1}_{2,\infty}$ and Proposition \ref{pro0}, we deduce that
\begin{align}\label{Mineq4}
\int_{S(t)}|\hat{\rho}_0|^2+|\hat{u}_0|^2+|\hat{B}_0|^2 d\xi
&\leq\sum_{j\leq \log_2[\frac {4} {3}C_2^{\frac 1 2 }(1+t)^{-\frac 1 2}]}\int_{\mathbb{R}^{2}} 2\varphi^2(2^{-j}\xi)(|\hat{\rho}_0|^2+|\hat{u}_0|^2+|\hat{B}_0|^2)d\xi \\ \notag
&\leq\sum_{j\leq \log_2[\frac {4} {3}C_2^{\frac 1 2 }(1+t)^{-\frac 1 2}]}2(\|\dot{\Delta}_j u_0\|^2_{L^2}+\|\dot{\Delta}_j \rho_0\|^2_{L^2}+\|\dot{\Delta}_j \tau_0\|^2_{L^2}) \\ \notag
&\leq \sum_{j\leq \log_2[\frac {4} {3}C_2^{\frac 1 2 }(1+t)^{-\frac 1 2}]}C 2^{2j} \\ \notag
&\leq C(1+t)^{-1}.
\end{align}
Thanks to Minkowski's inequality, we have
\begin{align}\label{Mineq5}
&\int_{S(t)}\int_{0}^{t}|\hat{F}_1\cdot\bar{\hat{\rho}}|+|\hat{G}_1\cdot\bar{\hat{u}}|+|\hat{H}_1\cdot\bar{\hat{B}}|dsd\xi  \\ \notag
&\leq C(\int_{S(t)}d\xi)^{\frac 1 2} \int_{0}^{t}\|\hat{F}_1\cdot\bar{\hat{\rho}}\|_{L^{2}}+\|\hat{G}_1\cdot\bar{\hat{u}}\|_{L^{2}}+\|\hat{H}_1\cdot\bar{\hat{B}}\|_{L^{2}}ds \\ \notag
&\leq C(1+t)^{-\frac 1 2} \int_{0}^{t}\|(\rho,u,B)\|^2_{L^{2}}(\|\nabla u\|_{H^{1}}+\|\nabla\rho\|_{L^{2}}+\|\nabla B\|_{H^{1}})ds,
\end{align}
where we used the fact that
\begin{align*}
\|curl[\frac {(curlB)\times B}{\rho+1}]\|_{L^1}
&\leq C(\|B\|_{L^2}\|\nabla^2 B\|_{L^2}+\|\nabla B\|^2_{L^2}+\|B\|_{L^2}\|\nabla B\|_{L^4}\|\nabla \rho\|_{L^4})  \\
&\leq C\|B\|_{L^2}\|\nabla B\|_{H^1}.
\end{align*}
It follows from \eqref{Mineq3}-\eqref{Mineq5} that
\begin{align}\label{Mineq6}
&\int_{S(t)}|\hat{\rho}(t,\xi)|^2+|\hat{u}(t,\xi)|^2+|\hat{B}(t,\xi)|^2 d\xi\leq  C(1+t)^{-1}  \\ \notag
&+C(1+t)^{-\frac 1 2} \int_{0}^{t}\|(\rho,u,B)\|^2_{L^{2}}(\|\nabla u\|_{H^{1}}+\|\nabla\rho\|_{L^{2}}+\|\nabla B\|_{H^{1}})ds.
\end{align}
According to \eqref{Mineq2} and \eqref{Mineq6}, we obtain
\begin{align*}
&\frac d {dt} \bar{E}_0(t)+\frac {C_2} {1+t}\|(u,B)\|^2_{H^2}+\frac {\eta\gamma C_2} {1+t}\|\rho\|^2_{H^{1}}  \\ \notag
&\leq \frac {CC_2} {1+t}[(1+t)^{-1}+(1+t)^{-\frac 1 2} \int_{0}^{t}\|(\rho,u,B)\|^2_{L^{2}}(\|\nabla u\|_{H^{1}}+\|\nabla\rho\|_{L^{2}}+\|\nabla B\|_{H^{1}})ds],
\end{align*}
which implies that
\begin{align}\label{Mineq7}
(1+t)^{\frac 3 2}\bar{E}_0(t)&\leq C\int_{0}^{t}\|\nabla^2 \rho\|^2_{L^{2}}(1+s)^{\frac 1 2}ds+C(1+t)^{\frac 1 2}  \\ \notag
&+C(1+t)\int_{0}^{t}\|(\rho,u,B)\|^2_{L^{2}}(\|\nabla u\|_{H^{1}}+\|\nabla\rho\|_{L^{2}}+\|\nabla B\|_{L^{2}})ds\\ \notag
&\leq C(1+t)^{\frac 1 2}+C(1+t)\int_{0}^{t}\|(\rho,u,B)\|^2_{L^{2}}(\|\nabla u\|_{H^{1}}+\|\nabla\rho\|_{L^{2}}+\|\nabla B\|_{H^{1}})ds.
\end{align}
Define $\bar{N}(t)=\sup_{0\leq s\leq t}(1+s)^{\frac 1 2}\bar{E}_0(s)$. According to \eqref{Mineq7}, we get
\begin{align}\label{Mineq8}
\bar{N}(t)\leq C+C\int_{0}^{t}N(s)(1+s)^{-\frac 1 2}(\|\nabla u\|_{H^{1}}+\|\nabla\rho\|_{L^{2}}+\|\nabla B\|_{H^{1}})ds.
\end{align}
Applying Gronwall's inequality and Proposition \ref{prop7}, we obtain $N(t)\leq C$,
which implies that
\begin{align}\label{Mineq9}
\bar{E}_0\leq C(1+t)^{-\frac 1 2}.
\end{align}
Taking $\sigma=1$ in \eqref{Mineq1}, we have
\begin{align}\label{Mineq10}
\frac d {dt} \bar{E}_1+\bar{D}_1\leq 0,
\end{align}
According to \eqref{Mineq9} and \eqref{Mineq10} we deduce that
\begin{align}\label{Mineq11}
&\frac d {dt} \bar{E}_1+\frac {C_2} {1+t}(\|\nabla (u,B)\|^2_{H^{1}}+\eta\gamma\|\nabla \rho\|^2_{L^{2}})
\leq C(1+t)^{-\frac 5 2}.
\end{align}
By \eqref{Mineq1}, we obtain
\begin{align*}
(1+t)^{\frac 5 2}\bar{E}_1&\leq C(1+t)+C\int_{0}^{t}\|\nabla^2 \rho\|^2_{L^{2}}(1+s)^{\frac 3 2}ds  \\ \notag
&\leq C(1+t)+C\int_{0}^{t}\bar{E}_0(s)(1+s)^{\frac 1 2}ds   \\ \notag
&\leq C(1+t),
\end{align*}
which implies that
\begin{align*}
\bar{E}_1\leq C(1+t)^{-\frac 3 2}.
\end{align*}
Therefore, we complete the proof of Proposition \ref{prop8}.
\end{proof}

By Proposition \ref{prop8}, we can prove that the solution of \eqref{M1} belongs to
some Besov space with negative index.
\begin{lemm}\label{Lemma2}
Let $0<\alpha,\sigma\leq 1$ and $\sigma<2\alpha$. Assume that $(\rho_0,u_0,B_0)$ satisfy the same condition in Theorem \ref{th4}. For any $t\in [0,+\infty)$, if
\begin{align}\label{de3}
\bar{E}_0(t)\leq C(1+t)^{-\alpha},~~~~\bar{E}_1(t)\leq C(1+t)^{-\alpha-1},
\end{align}
then we have
\begin{align}\label{Mineq12}
(\rho,u,B)\in L^{\infty}(0,\infty;\dot{B}^{-\sigma}_{2,\infty}).
\end{align}
\end{lemm}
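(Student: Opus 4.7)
The plan is to mirror the strategy of Lemma~\ref{Lemma1} almost verbatim, treating the Hall-MHD system \eqref{M1} block by block in the Littlewood-Paley decomposition. First I would apply $\dot{\Delta}_j$ to each of the three equations of \eqref{M1} and perform a standard $L^2$ energy estimate, exploiting the cancellation of the linear coupling terms (as recorded in the identities $\mathcal{R}e[i\xi\cdot\hat{u}\bar{\hat{\rho}}]+\mathcal{R}e[\hat{\rho}i\xi\cdot\bar{\hat{u}}]=0$ in the Fourier proof of Proposition~\ref{prop8}). This yields
\begin{align*}
\frac{d}{dt}\bigl(\gamma\|\dot{\Delta}_j\rho\|^2_{L^2}+\|\dot{\Delta}_j u\|^2_{L^2}+\|\dot{\Delta}_j B\|^2_{L^2}\bigr)+\text{(dissipation)}\leq C\bigl(\|\dot{\Delta}_j F_1\|_{L^2}\|\dot{\Delta}_j\rho\|_{L^2}+\cdots\bigr).
\end{align*}
Dropping the dissipation, multiplying by $2^{-2j\sigma}$, taking $\ell^\infty$ in $j$, and integrating in time then produces
\begin{align*}
M^2(t)\leq CM^2(0)+CM(t)\int_0^t \|(F_1,G_1,H_1)\|_{\dot{B}^{-\sigma}_{2,\infty}}\,ds,
\end{align*}
where $M(t)=\sup_{0\leq s\leq t}(\|\rho\|_{\dot{B}^{-\sigma}_{2,\infty}}+\|u\|_{\dot{B}^{-\sigma}_{2,\infty}}+\|B\|_{\dot{B}^{-\sigma}_{2,\infty}})$.

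The heart of the argument is to show that the time integral of $\|(F_1,G_1,H_1)\|_{\dot{B}^{-\sigma}_{2,\infty}}$ is finite. Since $d=2$, Lemma~\ref{Lemma} gives the embedding $L^{\frac{2}{\sigma+1}}\hookrightarrow \dot{B}^{-\sigma}_{2,\infty}$, so it suffices to bound each nonlinearity in $L^{\frac{2}{\sigma+1}}$. For the quadratic terms of CNS type in $F_1$ and $G_1$ (namely $\mathrm{div}(\rho u)$, $u\cdot\nabla u$, $I(\rho)(\Delta+\nabla\mathrm{div})u$, $k(\rho)\nabla\rho$) one applies Hölder with exponents $(\tfrac{2}{\sigma},2)$ and the two-dimensional Gagliardo--Nirenberg inequality $\|f\|_{L^{2/\sigma}}\leq C\|f\|^{\sigma}_{L^2}\|\nabla f\|^{1-\sigma}_{L^2}$, exactly as in \eqref{ineq33}. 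Inserting the hypothesized decay \eqref{de3}, each such term contributes $(1+s)^{-(1+\alpha-\sigma/2)}$, which is integrable in $s$ precisely because $\sigma<2\alpha$.

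The main new obstacle compared to Lemma~\ref{Lemma1} is the Hall contribution $\mathrm{curl}\bigl[\tfrac{(\mathrm{curl}B)\times B}{\rho+1}\bigr]$ in $H_1$, which contains one more derivative than the other nonlinearities. I would split it using $\tfrac{1}{\rho+1}=1-I(\rho)$ and estimate the model term $B\cdot\nabla^2 B$ (the rest being similar or easier). By Hölder with the pair $(\tfrac{2}{\sigma},2)$ and the same Gagliardo--Nirenberg inequality,
\begin{align*}
\|B\cdot\nabla^2 B\|_{L^{\frac{2}{\sigma+1}}}\leq C\|B\|^{\sigma}_{L^2}\|\nabla B\|^{1-\sigma}_{L^2}\|\nabla^2 B\|_{L^2}.
\end{align*}
Using $\|B\|_{L^2}\leq C(1+s)^{-\alpha/2}$ and $\|\nabla B\|_{H^1}\leq C(1+s)^{-(\alpha+1)/2}$ from \eqref{de3}, this is bounded by $C(1+s)^{-(\alpha+1-\sigma/2)}$, once again integrable in $s$ thanks to $\sigma<2\alpha$. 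The terms coming from the curl of $I(\rho)(\mathrm{curl}B)\times B$ and from $\mathrm{curl}(u\times B)$ are handled by the same Hölder--Gagliardo--Nirenberg combination and produce analogous or stronger time decay.

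Finally, since the initial data lie in $H^2\cap \dot{B}^{-1}_{2,\infty}$ and $\dot{B}^{-1}_{2,\infty}\cap L^2\hookrightarrow \dot{B}^{-\sigma}_{2,\infty}$ for $0<\sigma\leq 1$ by real interpolation, $M(0)$ is finite, and the Young-type inequality $M^2(t)\leq C+CM(t)$ yields $M(t)\leq C$ uniformly in $t$, which is \eqref{Mineq12}. The only delicate point to verify carefully is the Hall term estimate; the rest of the proof is essentially the Hall-MHD transcription of Lemma~\ref{Lemma1}.
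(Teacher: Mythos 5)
Your proposal is correct and follows essentially the same route as the paper: apply $\dot{\Delta}_j$ to \eqref{M1}, run the block-wise $L^2$ energy estimate with the $\gamma$-weighted cancellation, pass to the $\dot{B}^{-\sigma}_{2,\infty}$ norm to get the quadratic inequality for $\bar{M}(t)$, and control $\int_0^t\|(F_1,G_1,H_1)\|_{\dot{B}^{-\sigma}_{2,\infty}}ds$ via the embedding $L^{\frac{2}{\sigma+1}}\hookrightarrow\dot{B}^{-\sigma}_{2,\infty}$ together with H\"older and Gagliardo--Nirenberg, the condition $\sigma<2\alpha$ giving integrability. Your explicit treatment of the Hall term $B\cdot\nabla^2 B$ is consistent with (and slightly more detailed than) the paper's estimate \eqref{Mineq15}.
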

\begin{proof}
Applying $\dot{\Delta}_j$ to \eqref{M1}, we get
\begin{align}\label{M3}
\left\{
\begin{array}{ll}
\dot{\Delta}_j\rho_t+div~\dot{\Delta}_j u=\dot{\Delta}_j F_1,  \\[1ex]
\dot{\Delta}_j u_t-(\Delta+\nabla div)\dot{\Delta}_j u+\gamma\nabla\dot{\Delta}_j \rho=\dot{\Delta}_j G_1,  \\[1ex]
\dot{\Delta}_j B_t-\Delta\dot{\Delta}_j B=\dot{\Delta}_j H_1. \\[1ex]
\end{array}
\right.
\end{align}
Using the standard estimate in Besov spaces for \eqref{M3}, we obtain
\begin{align}\label{Mineq13}
&\frac d {dt}(\gamma\|\rho\|^2_{\dot{B}^{-\sigma}_{2,\infty}}+\|u\|^2_{\dot{B}^{-\sigma}_{2,\infty}}+\|B\|^2_{\dot{B}^{-\sigma}_{2,\infty}(\mathcal{L}^{2})})  \\ \notag
&\leq C(\|F\|_{\dot{B}^{-\sigma}_{2,\infty}}\|\rho\|_{\dot{B}^{-\sigma}_{2,\infty}}+\|G\|_{\dot{B}^{-\sigma}_{2,\infty}}\|u\|_{\dot{B}^{-\sigma}_{2,\infty}}+\|H\|_{\dot{B}^{-\sigma}_{2,\infty}}\|B\|_{\dot{B}^{-\sigma}_{2,\infty}}).
\end{align}
Define $\bar{M}(t)=\sum_{0\leq s\leq t} \|\rho\|_{\dot{B}^{-\sigma}_{2,\infty}}+\|u\|_{\dot{B}^{-\sigma}_{2,\infty}}+\|B\|_{\dot{B}^{-\sigma}_{2,\infty}}$. According to \eqref{Mineq13}, we deduce that
\begin{align}\label{Mineq14}
\bar{M}^2(t)&\leq C\bar{M}^2(0)+C\bar{M}(t)\int_0^{t}\|(F_1,G_1,H_1)\|_{\dot{B}^{-\sigma}_{2,\infty}}ds  .
\end{align}
Using \eqref{de3} and Lemmas \ref{Lemma}, \ref{Lemma0}, we obtain
\begin{align}\label{Mineq15}
\int_0^{t}\|(F_1,G_1,H_1)\|_{\dot{B}^{-\sigma}_{2,\infty}}ds
&\leq C\int_0^{t}\|(F_1,G_1,H_1)\|_{L^{\frac 2 {\sigma+1}}}ds  \\ \notag
&\leq C\int_0^{t}\|(\rho,u,B)\|_{L^{\frac 2 \sigma}}\|\nabla (\rho,u,B)\|_{H^{1}}+\|\nabla B\|_{L^{2}}\|\nabla (\rho,B)\|_{L^{\frac 2 \sigma}}ds   \\ \notag
&\leq C\int_0^{t}(1+s)^{-(1+\alpha-\frac \sigma 2)}ds\leq C.
\end{align}
Then we have $\bar{M}^2(t)\leq C\bar{M}^2(0)+C\bar{M}(t)$.  By virtue of interpolation theory, we have $\bar{M}^2(0)\leq C$, which implies that $\bar{M}(t)\leq C$.
\end{proof}

{\bf The proof of Theorem \ref{th4}:}  \\
According to the proof of Proposition \ref{prop8}, we obtain
\begin{align}\label{Mineq16}
&\frac d {dt} \bar{E}_0(t)+\frac  {C_2} {1+t}\|(u,B)\|^2_{H^2}+\frac {\eta\gamma C_2} {1+t}\|\rho\|^2_{H^{1}} \\ \notag
&\leq \frac {CC_2} {1+t}((1+t)^{-1}+\int_{S(t)}\int_{0}^{t}|\hat{F}_1\cdot\bar{\hat{\rho}}|+|\hat{G}_1\cdot\bar{\hat{u}}|+|\hat{H}_1\cdot\bar{\hat{B}}|dsd\xi).
\end{align}
By virtue of Propositions \ref{prop8} and Lemma \ref{Lemma2} with $\alpha=\sigma=\frac 1 2$, we have
\begin{align}\label{Mineq17}
\int_{S(t)}\int_{0}^{t}|\hat{F}\cdot\bar{\hat{\rho}}|+|\hat{G}\cdot\bar{\hat{u}}|+|\hat{H}_1\cdot\bar{\hat{B}}|dsd\xi
&\leq C(1+t)^{-\frac 3 4}\bar{M}(t)\int_{0}^{t}\|(F_1,G_1,H_1)\|_{L^{1}}ds  \\ \notag
&\leq C(1+t)^{-\frac 3 4}\int_{0}^{t}(1+s)^{-1}ds  \\ \notag
&\leq C(1+t)^{-\frac 5 8}.
\end{align}
Using \eqref{Mineq10}, \eqref{Mineq16} and \eqref{Mineq17}, we deduce that
\begin{align}\label{Mineq18}
\bar{E}_0(t)\leq C(1+t)^{-\frac 5 8},~~~~\bar{E}_1(t)\leq C(1+t)^{-\frac 5 8-1}
\end{align}
Taking $\alpha=\frac 5 8$ and $\sigma=1$ in \eqref{Mineq15} and Lemma \ref{Lemma2}, we have
\begin{align}\label{Mineq19}
\int_{S(t)}\int_{0}^{t}|\hat{F}_1\cdot\bar{\hat{\rho}}|+|\hat{G}_1\cdot\bar{\hat{u}}|+|\hat{H}_1\cdot\bar{\hat{B}}|dsd\xi
&\leq C(1+t)^{-1}\bar{M}(t)\int_{0}^{t}\|(F_1,G_1,H_1)\|_{L^{1}}ds  \\ \notag
&\leq C(1+t)^{-1}.
\end{align}
According to \eqref{Mineq16} and \eqref{Mineq19}, we obtain
$\bar{E}_0(t)\leq C(1+t)^{-1}$.
By \eqref{Mineq10}, we can deduce that $\bar{E}_1\leq C(1+t)^{-2}$.
We thus complete the proof of Theorem \ref{th4}.
\hfill$\Box$

\smallskip
\noindent\textbf{Acknowledgments} This work was
partially supported by the National Natural Science Foundation of China (No.12171493 and No.11671407), the Macao Science and Technology Development Fund (No. 0091/2018/A3), Guangdong Province of China Special Support Program (No. 8-2015),
the key project of the Natural Science Foundation of Guangdong province (No. 2016A030311004), and National Key R$\&$D Program of China (No. 2021YFA1002100).


\noindent\textbf{Data Availability.}
The data that support the findings of this study are available on citation. The data that support the findings of this study are also available
from the corresponding author upon reasonable request.

\phantomsection
\addcontentsline{toc}{section}{\refname}
\bibliographystyle{abbrv}
\bibliography{OldroydBref}

\end{document}